\providecommand{\U}[1]{\protect\rule{.1in}{.1in}}
\newtheorem{theorem}{Theorem}
\newtheorem{acknowledgement}[theorem]{Acknowledgement}
\newtheorem{algorithm}[theorem]{Algorithm}
\newtheorem{corollary}[theorem]{Corollary}
\newtheorem{example}[theorem]{Example}
\newtheorem{proposition}[theorem]{Proposition}
\newtheorem{remark}[theorem]{Remark}
\newenvironment{proof}[1][Proof]{\noindent\textbf{#1.} }{\ \rule{0.5em}{0.5em}}
\begin{document}

\title{A random shock model with mixed effect, including competing soft and sudden
failures, and dependence}
\author{Sophie MERCIER\thanks{Laboratoire de Math\'{e}matiques et de leurs
Applications -- Pau (UMR CNRS 5142), Universit\'{e} de Pau et des Pays de
l'Adour, B\^{a}timent IPRA, Avenue de l'Universit\'{e}, F-64013 PAU cedex,
FRANCE; sophie.mercier@univ-pau.fr} \ \& Hai Ha PHAM\thanks{Research and
Development Institute, Duytan University, K7/25 Quang Trung, Da Nang, VIETNAM;
phamhaiha09@gmail.com} }
\date{Preprint accepted for publication in \emph{Methodology and Computing in
Applied Probability}, 31 August 2014,\\
\url{http://dx.doi.org/10.1007/s11009-014-9423-6}}
\maketitle

\begin{abstract}
A system is considered, which is subject to external and possibly fatal
shocks, with dependence between the fatality of a shock and the system age.
Apart from these shocks, the system suffers from competing soft and sudden
failures, where soft failures refer to the reaching of a given threshold for
the degradation level, and sudden failures to accidental failures,
characterized by a failure rate. A non-fatal shock increases both degradation
level and failure rate of a random amount, with possible dependence between
the two increments. The system reliability is calculated by four different
methods. Conditions under which the system lifetime is New Better than Used
are proposed. The influence of various parameters of the shocks environment on
the system lifetime is studied.

\textbf{Keywords: }Reliability; Bivariate non homogeneous compound Poisson
process; Hazard rate process; Poisson random measure; Stochastic order; Ageing
properties; Two-component series system.

\textbf{AMS MSC: }60K10 \& 60G51

\end{abstract}

\section{Introduction}

This paper is devoted to the survival analysis of a system subject to
competing failure modes within an external stressing environment. The external
environment is assumed to stress the system at random and isolated times
according to a random shock model. Such a model can represent external demands
e.g., which put some stress on the system at their arrivals. Shock models have
been the subject of an extensive literature. Following
\citet{mallor2003classification}, shock models may be classified into
different categories, according to whether arrival times and shock magnitudes
are correlated \citep{mallor2001shocks, mallor2003reliability}, or independent
\citep{cha2009terminating}, and according to the assumption put on the shocks
arrival process: homogeneous or non-homogeneous Poisson process
\citep{abdul1972nonstationary, cha2009terminating, cha2007study,
cha2011stochastic, esary1973shock, qian1999cumulative}, renewal process
\citep{skoulakis2000} or non-stationary pure birth process
\citep{proschan1975shock}. According to the influence of shocks on the system,
shock models may be further classified into three types: extreme shock models
(a shock can cause the system immediate failure, see
\citealt{gut1999extreme}), cumulative shock models (a shock increases some
intrinsic characteristic of the system such as its deterioration, failure
rate, age, number of already endured shocks, ..., see \citealt{cha2007study,
qian1999cumulative}) and mixed shock models (a shock can either cause the
system immediate failure or increase some intrinsic characteristic, see
\citealt{cha2011stochastic, gut2001mixed}). More references can be found in
\color{black}%
\citet{finkelstein2013stochastic}%
\color{black}%
; \citet{mallor2003classification};
\color{black}%
\citet{nakagawa2007shock}%
\color{black}%
; \citet{singpurwalla1995survival}.

In this paper, a shock model with mixed effects is considered, where the
occurrence of shocks is classically modelled through a non-homogeneous Poisson
process and where each shock may result in the system immediate failure
through a Bernoulli trial, independent of the system intrinsic behaviour.
\color{black}%
Apart from shocks, the system suffers from competing soft and sudden failures,
where soft failures refer to the reaching of some given threshold for the
degradation level, and sudden failures to accidental failures, characterized
by a failure rate. A possible representation corresponds to a two-component
series system, where the first component is subject to soft failures and the
second one to sudden failure. The system may hence fail through three
different competing modes: traumatic failure due to a fatal shock; soft
failure; sudden failure. Each non fatal shock induces some increase of both
deterioration level and failure, with possible dependence in-between.

In the oldest literature, most models considered one single possible type of
failures for the system: e.g. soft failures in \cite{marshall1979multivariate}
or traumatic failures due to shocks in \cite{savits1988some}, both in the
multivariate setting. More recently, different models have been developed,
which consider two different types of failures. For instance, competition
between soft and sudden failures is studied in \cite{Zhu2010}. Several
application cases are proposed in the paper (see also references therein). An
industrial example is also provided in \cite{Wang2014}, which studies the
reliability of an aircraft
\color{black}%
engine. Note that \cite{Zhu2010} assumes soft and sudden failures to be
independent whereas the stressing environment of the present paper makes them
dependent. Competition between soft failures and occurrence of a traumatic
event are considered in Degradation-Threshold-Shock models (DTS-models,
denomination of \citealt{lehmann2006}) which have been proposed by
\cite{lemoine1985} and further studied by \cite{lehmann2006, lehmann2009}. A
case study is provided in \cite{Hao2013} for the analysis of fatigue crack
growth, where the effects of shocks on the degradation are put into evidence.
Note that, contrary to the present paper, DTS-models consider some possible
influence of the deterioration level of the system on the shocks arrival rate.
However, the first shock of a DTS-model is always fatal to the system (leading
to a single shock possibly endured by the system), whereas successive non
fatal shocks are here envisioned. We could not find any paper which takes into
account three competing failure modes, as in the present paper. However, based
on the case studies from the previous literature, one can think that our model
can reflect lots of systems subject to competing soft and hard failures,
within a stressing environment. For instance, one can think of an aerial cabin
hooked to a cable: the supporting cable is deteriorating due to corrosion and
fatigue; the linking pulley is subject to sudden failures (and maybe also to
deterioration); both cable and pulley endure shocks at each cabin travel,
which increase jointly their respective deterioration level and failure rate.

The present paper consider several kinds of dependence between the three
competing failure modes: at each non fatal shock, the increase of
deterioration and failure rate is simultaneous. This induces a first type of
dependence between soft and sudden failures. Each shock may induce a failure,
either because the shock is fatal, or because the deterioration is suddenly
increased beyond the threshold level. This induces a second type of dependence
between soft and traumatic failures, which may be simultaneous. Also, some
possible dependence is envisioned between the increments of failure rate and
of deterioration at each non fatal shock. This induces a third type of
dependence between soft and sudden failures.
\color{black}%
Finally, following \cite{cha2009terminating, cha2011stochastic}, the
probability for a shock to be fatal depends on the shock arrival time, which
induces a last type of dependence. Up to our knowledge, all these kinds of
dependence have not been yet considered altogether and, as will be seen all
along the text, this model enlarges several ones from the previous literature.

The paper is organized as follows: the model is specified in Section
\ref{model}. The system reliability is computed through different methods in
Section 3.
\color{black}%
Sufficient conditions are provided in Section 4\textbf{\ }for the system
lifetime to be New Better than Used. The influence of various parameters of
the shock environment on the lifetime is studied in Section 5.
\color{black}%
Numerical experiments are proposed in Section \ref{section Num} and concluding
remarks end the paper in Section \ref{concl}.

\section{The model\label{model}}%

\color{black}%
To make the model clear, a two-component series system is considered, where
the first component is subject to sudden failure and the second one to soft
failure. This two-unit system is just a representation for the competing soft
and sudden failure modes, with no restriction.
\color{black}%
In the ideal condition (for example in a laboratory environment), the lifetime
of the first component is characterized by its intrinsic hazard rate $h(t)$,
$t\geq0$ while the second one is subject to some accumulative deterioration
modeled by an increasing stochastic process $(G_{t})_{t\geq0}$ (e.g. a gamma
process). The second component fails once its deterioration level exceeds a
failure threshold $L$. The lifetimes of the two components are made dependent
by their common stressing environment. This environment is modelled by a
random shock process, where the shocks arrive according to a non-homogeneous
Poisson process $(N_{t})_{t\geq0}$ with intensity $d\Lambda(t)=\lambda(t)dt$
(or cumulated intensity $\Lambda(t)$). To avoid useless technical details, we
assume that $\Lambda\left(  t\right)  >0$ for all $t>0$. More generally, one
might consider that $\Lambda\left(  t\right)  >0$ only for $t$ greater than
some $t_{0}>0$, which would mean that the shocks would arrive only after time
$t_{0}$. The points of the Poisson process are denoted by $T_{1}$, ...,
$T_{n}$, ... with $T_{0}=0<T_{1}<\ldots<T_{n}<\ldots$ almost surely. A shock
at time $t$ may cause the system immediate failure (fatal shock) with
probability $p(t)\in\lbrack0,1]$, which depends on the age $t$ of the system
at the shock arrival. A shock at time $t$ is non fatal with probability
$q\left(  t\right)  =1-p\left(  t\right)  $. A non fatal shock at time $T_{i}$
increases the deterioration of both components in a different way:

\begin{itemize}
\item for the first component, its hazard rate is increased of a non negative
random amount $V_{i}^{(1)}$,

\item for the second one, its accumulated deterioration is increased of a non
negative random amount $V_{i}^{(2)}$.
\end{itemize}

The random vectors $V_{i}=\left(  V_{i}^{(1)},V_{i}^{(2)}\right)  $, $i=1$,
$2$, ... are assumed to be independent and identically distributed (i.i.d.)
with common distribution $\mu(dv_{1},dv_{2})$, and independent of the shocks
arrival times $(T_{n})_{n\geq1}$ (and hence independent of the Poisson process
$(N_{t})_{t\geq0}$). At each shock, the increments $V_{i}^{(1)}$ and
$V_{i}^{(2)}$ are possibly dependent. When subscript $i$ is unnecessary, we
drop it and set $V=\left(  V^{\left(  1\right)  },V^{\left(  2\right)
}\right)  $ to be a generic copy of $V_{i}=\left(  V_{i}^{(1)},V_{i}%
^{(2)}\right)  $. For $j=1,2$, the distribution of $V^{\left(  j\right)  }$ is
denoted by $\mu_{j}(dv_{j})$.

We set
\color{black}%
$\left(  A_{t}\right)  _{t\geq0}$ to be the bivariate compound Poisson process
defined by
\begin{equation}
A_{t}=\left(  \sum_{k=1}^{N_{t}}V_{k}^{(1)},\sum_{k=1}^{N_{t}}V_{k}%
^{(2)}\right)  =\left(  A_{t}^{\left(  1\right)  },A_{t}^{\left(  2\right)
}\right) \label{At_i}%
\end{equation}
with%
\begin{align*}
A_{t}^{(1)}  & ={\sum_{k=1}^{N_{t}}V_{k}^{(1)},}\\
A_{t}^{(2)}  & ={\sum_{k=1}^{N_{t}}V_{k}^{(2)},}%
\end{align*}%
\color{black}%
where $\sum_{k=1}^{0}\cdots=0$.

The processes $(A_{t})_{t\geq0}$ and $(G_{t})_{t\geq0}$ are assumed to be independent.

Provided that the system is functioning up to time $t$, the random variables
$A_{t}^{(1)}$ and $A_{t}^{(2)}$ stand for the cumulated increments on $\left[
0,t\right]  $ of the failure rate of the first component and of the
deterioration of the second component due to the external environment,
respectively. Setting $\mathcal{F}_{t}=\sigma\left(  A_{s},s\leq t\right)  $
to be the $\sigma-$field generated by $\left(  A_{s}\right)  _{s\leq t}$ and
provided that the system is still up at time $t$, the conditional hazard rate
of the first component given $\mathcal{F}_{t}$ is%
\[
X_{t}^{(1)}=h(t)+A_{t}^{(1)}%
\]
and the conditional deterioration of the second component given $\mathcal{F}%
_{t}$ is%
\[
X_{t}^{(2)}=G_{t}+A_{t}^{(2)}.
\]

To make it clearer, we introduce $\tau_{i}$, $i=1,2$ to be the lifetime of the
$i^{th}$ component under the external environment, without taking into account
the possibility of fatal shocks for the system. To simplify the writing, we
denote by $\mathbb{P}\left(  B|\mathcal{F}_{t}\right)  $ the conditional
expectation $\mathbb{E}\left(  \mathbf{1}_{B}|\mathcal{F}_{t}\right)  $ for
any measurable set $B$, where $\mathbf{1}_{B}$ stands for the indicator
function ($\mathbf{1}_{B}\left(  \omega\right)  =1$ if $\omega\in B$, 0
elsewhere). We then have:%
\begin{align}
\mathbb{P}\left(  \tau_{1}>t|\mathcal{F}_{t}\right)   &  =e^{-\int_{0}%
^{t}X_{s}^{(1)}~ds}=e^{-H\left(  t\right)  }e^{-\int_{0}^{t}A_{s}^{(1)}%
~ds},\label{tau1}\\
\mathbb{P}\left(  \tau_{2}>t|\mathcal{F}_{t}\right)   &  =\mathbb{P}\left(
X_{t}^{(2)}\leq L|\mathcal{F}_{t}\right)  =\mathbb{P}\left(  G_{t}\leq
L-A_{t}^{(2)}|\mathcal{F}_{t}\right)  =F_{G_{t}}\left(  L-A_{t}^{\left(
2\right)  }\right) \label{tau2}%
\end{align}
where
\begin{equation}
H\left(  t\right)  =\int_{0}^{t}h(s)~ds\label{H}%
\end{equation}
is the cumulated intrinsic failure rate of the first component and where
$F_{G_{t}}$ stands for the cumulative distribution function (c.d.f.) of $G_{t}
$. (Recall that $G_{t}$ is independent of $\mathcal{F}_{t}$).

We now let $\tau_{3}$ to be the time to the first fatal shock for the system
with
\[
\tau_{3}=\inf\left(  n\geq1:\text{ the shock at time }T_{n}\text{ is
fatal}\right)
\]
and we assume that the Bernoulli trials (fatal shocks or not) which happen at
each shock arrival are independent one with each other, and that they depend
on $\mathcal{F}_{t}$ only through the $q\left(  T_{n}\right)  $'s, that is:
\begin{equation}
\mathbb{P}\left(  \tau_{3}>t|\mathcal{F}_{t}\right)  =\prod_{i=1}^{\infty
}q\left(  T_{i}\right)  \mathbf{1}_{\left[  0,t\right]  }\left(  T_{i}\right)
=\prod_{i=1}^{N_{t}}q\left(  T_{i}\right) \label{tau3}%
\end{equation}
where $\prod_{i=1}^{0}\cdots=1$.

The system failure is induced either by a fatal shock or by a component
failure
\color{black}%
(soft or sudden failure)%
\color{black}%
, whatever arrives first. The lifetime of the system hence is
\begin{equation}
\tau=\min\left(  \tau_{1},\tau_{2},\tau_{3}\right)  .\label{tau}%
\end{equation}

We finally make the additional assumption that $\tau_{1}$, $\tau_{2}$ and
$\tau_{3}$ are conditionally independent given $\mathcal{F}_{t}:$%
\begin{align*}
&  \mathbb{P}\left(  \tau_{1}>t,\tau_{2}>t,\tau_{3}>t|\mathcal{F}_{t}\right)
\\
&  =\mathbb{P}\left(  \tau_{1}>t|\mathcal{F}_{t}\right)  \mathbb{P}\left(
\tau_{2}>t|\mathcal{F}_{t}\right)  \mathbb{P}\left(  \tau_{3}>t|\mathcal{F}%
_{t}\right)  .
\end{align*}

To sum up, the whole model is specified by:

\begin{itemize}
\item $\left(  V^{\left(  1\right)  },V^{\left(  2\right)  }\right)  :$ the
(generic) random increments in failure rate (first component, $V^{\left(
1\right)  }$) and deterioration (second component, $V^{\left(  2\right)  }$),

\item $\lambda\left(  x\right)  dx:$ the intensity of the non-homogeneous
Poisson process,

\item $h\left(  x\right)  :$ the intrinsic failure rate of the first
component
\color{black}%
(sudden failure)%
\color{black}%
,

\item $(G_{t})_{t\geq0}:$ the intrinsic deterioration of the second component
\color{black}%
(soft failure)%
\color{black}%
,

\item $p\left(  t\right)  :$ the probability for a shock at time $t$ to be
fatal at the system level,
\end{itemize}

(plus some independence assumptions).

\medskip

By taking special cases for these five ingredients, we can see that our model
extents some well-known models from the literature.

For instance, taking $p\left(  t\right)  =0$ (no fatal shocks), $V^{\left(
1\right)  }=$ constant, $V^{\left(  2\right)  }=0$ and $G_{t}=0$ all $t\geq0$
(no second component), one gets the "stochastic failure model in random
environment" from \cite{cha2007study}.

Taking $V^{\left(  2\right)  }=0$ and $G_{t}=0$ all $t\geq0$ (one single
component), one gets the "stochastic survival model for a system under
randomly variable environment" from \cite{cha2011stochastic}.

Taking $G_{t}=0$ all $t\geq0$, $V^{\left(  1\right)  }=V^{\left(  2\right)
}=0$, the model resumes to a classical extreme shock model (one single
component), where system failures are only due to shocks arriving according to
a non-homogeneous Poisson process, with probability $p\left(  t\right)  $ for
a shock to be fatal (and $q\left(  t\right)  $ to be harmless). This model is
interpreted as the Brown-Proschan model by \cite{cha2009terminating}; see also
\cite{brown1983imperfect} where various properties of the model are explored.

Taking $p\left(  t\right)  =0$ (no fatal shocks), $V^{\left(  1\right)
}=h\left(  t\right)  =0$ (one single component), $\lambda\left(  x\right)
=\lambda$ (homogeneous Poisson process), $G_{t}=0$ all $t\geq0$ (no intrinsic
deterioration for the second - and single - component), one gets the
"cumulative damage threshold model" from \cite[Section 4, case of i.i.d.
damage increments]{esary1973shock}.

Taking $p\left(  t\right)  =p$ (constant), $V^{\left(  1\right)  }=h\left(
t\right)  =0$ (one single component), $G_{t}=0$ all $t\geq0$, one gets the
"cumulative damage model with two kinds of shocks" from \cite[case of i.i.d.
damage increments]{qian1999cumulative}.

Taking $V^{\left(  1\right)  }=h=0$, $V^{\left(  2\right)  }$ exponentially
distributed, $G_{t}=t$, all $t\geq0$, one gets the model from Subsection
\textbf{3.b} in \cite{cha2009terminating}.

All these models are summed up in Table \ref{table1}. Note that we do not
pretend at any exhaustibility and our model will include lots of other
previous models which are not provided here.

\begin{table}[ptb]
\caption{A few particular models from the literature}%
\label{table1}
\begin{center}%
\begin{tabular}
[c]{cc}\hline
Brown-Proschan model from \citep{brown1983imperfect} & $h=V^{\left(  1\right)
}=V^{\left(  2\right)  }=0$,\\
& $G_{t}=0$, $\forall t\geq0$\\\hline
Deterministic boundary in \citep{cha2009terminating} & $V^{\left(  1\right)
}=h=0$, $G_{t}=t$\\
& $V^{\left(  2\right)  }$ exponentially distributed\\\hline
\cite{cha2007study} & $q=1$, $V^{(1)}$ is a constant,\\
& $V^{\left(  2\right)  }=0$, $G_{t}=0$, $\forall t\geq0$\\\hline
\cite{cha2011stochastic} & $V^{\left(  2\right)  }=0$, $G_{t}=0$, $\forall
t\geq0$\\\hline
Cumulative damage threshold models & $q=1$, $V^{\left(  2\right)  }=0$,
$\lambda$ is a constant\\
in Section 4 of \citep{esary1973shock} & $h=0$, $G_{t}=0$, $\forall t\geq
0$\\\hline
\cite{qian1999cumulative} & $q$ is a constant, $h=V^{\left(  1\right)  }=0$,\\
& $G_{t}=0$, $\forall t\geq0$\\\hline
\end{tabular}
\vspace*{0.5cm}
\end{center}
\end{table}

\section{%
\color{black}%
Calculation of the system reliability\label{rel}}%

\color{black}%
The objective of this section is to calculate the reliability $R_{L}(t)$ of
the system at time $t$%
\color{black}%
, with%
\[
R_{L}(t)=\mathbb{P}(\tau>t),\text{ all }t\geq0,
\]
where we recall that the system lifetime $\tau$ is defined by $\left(
\ref{tau}\right)  $.

A first way to compute $R_{L}(t)$ is to use classical Monte-Carlo simulations
and to simulate a large number of independent histories for the system up to
time $t$ (\textbf{Method 1}). This method will serve as a comparison tool in
the numerical experimentations in Section \ref{section Num}.
\color{black}%
This method requires the simulation of a random variable with conditional
hazard rate $h(t)+{\sum_{k=1}^{N_{t}}V_{k}^{(1)}}$ (see Algorithm \ref{algo}
in Section \ref{section Num} for details) and may imply long computational
times for the system reliability. We provide below a few alternate methods
which may be quicker and also easier to implement.%
\color{black}%

\begin{proposition}
[Method 2]\label{prop1}The reliability is given by
\begin{equation}
R_{L}(t)=\mathbb{P}(\tau>t)=e^{-H(t)}\phi_{t}(L),\label{reliability}%
\end{equation}
where $H\left(  t\right)  $ is provided by $\left(  \ref{H}\right)  $ and
where
\begin{align}
\phi_{t}(L) &  =\mathbb{E}\left(  F_{G_{t}}\left(  L-A_{t}^{\left(  2\right)
}\right)  e^{-\int_{0}^{t}A_{s}^{(1)}~ds}\prod_{i=1}^{N_{t}}q\left(
T_{i}\right)  \right) \label{eq_phi1}\\
&  =\mathbb{E}\left(  F_{G_{t}}\left(  L-\sum_{i=1}^{N_{t}}V_{i}^{(2)}\right)
e^{-\sum_{i=1}^{N_{t}}V_{i}^{(1)}(t-T_{i})}\prod_{i=1}^{N_{t}}q(T_{i})\right)
\label{eq_phi2}%
\end{align}

\end{proposition}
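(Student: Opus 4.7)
The plan is to compute $R_L(t)=\mathbb{P}(\tau>t)$ directly from the definition $\tau=\min(\tau_1,\tau_2,\tau_3)$ given in $(\ref{tau})$. First I would write
\[
R_L(t)=\mathbb{P}(\tau_1>t,\tau_2>t,\tau_3>t)=\mathbb{E}\bigl[\mathbb{P}(\tau_1>t,\tau_2>t,\tau_3>t\mid\mathcal{F}_t)\bigr]
\]
and invoke the conditional independence assumption stated just before $(\ref{tau})$ to factor the conditional probability into the product $\mathbb{P}(\tau_1>t\mid\mathcal{F}_t)\,\mathbb{P}(\tau_2>t\mid\mathcal{F}_t)\,\mathbb{P}(\tau_3>t\mid\mathcal{F}_t)$. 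Substituting the three closed-form expressions $(\ref{tau1})$, $(\ref{tau2})$, $(\ref{tau3})$ and pulling the deterministic factor $e^{-H(t)}$ out of the expectation yields $(\ref{reliability})$ with $\phi_t(L)$ in the form $(\ref{eq_phi1})$.

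The second step is to rewrite $(\ref{eq_phi1})$ as $(\ref{eq_phi2})$. The only non-trivial ingredient is the identity
\[
\int_{0}^{t}A_{s}^{(1)}\,ds=\sum_{i=1}^{N_t}V_{i}^{(1)}(t-T_{i}),
\]
which I would establish by writing $A_{s}^{(1)}=\sum_{i\geq1}V_{i}^{(1)}\mathbf{1}_{\{T_i\leq s\}}$, applying Fubini/Tonelli (valid since the integrand is non-negative), and computing $\int_{0}^{t}\mathbf{1}_{\{T_i\leq s\}}\,ds=(t-T_i)\mathbf{1}_{\{T_i\leq t\}}$. Using the similar trivial identity $A_{t}^{(2)}=\sum_{i=1}^{N_t}V_{i}^{(2)}$ inside $F_{G_t}(L-\cdot)$ finishes the rewriting.

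Neither step is really an obstacle; the only thing to be careful about is the Fubini swap, which is justified by non-negativity, and the bookkeeping of the indicator $\mathbf{1}_{\{T_i\leq t\}}$ which converts the infinite sum into the sum over $i=1,\ldots,N_t$. No additional machinery beyond the model's definitions and the three identities $(\ref{tau1})$–$(\ref{tau3})$ is needed.
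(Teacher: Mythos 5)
Your proposal is correct and follows essentially the same route as the paper: condition on $\mathcal{F}_t$, factor by the assumed conditional independence, substitute $(\ref{tau1})$--$(\ref{tau3})$, and establish the identity $\int_{0}^{t}A_{s}^{(1)}\,ds=\sum_{i=1}^{N_t}V_{i}^{(1)}(t-T_{i})$ by the interchange of sum and integral. The paper's proof does exactly this (its equation $(\ref{int A1})$ is your Fubini step), so there is nothing further to compare.
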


\begin{proof}
Due to the conditional independence of $\tau_{1}$, $\tau_{2}$ and $\tau_{3}$
given $\mathcal{F}_{t}$, we have:
\begin{align*}
R_{L}(t)  &  =\mathbb{P}(\tau_{1}>t,\tau_{2}>t,\tau_{3}>t)\\
&  =\mathbb{E}\left(  \mathbb{P}\left(  \tau_{1}>t,\tau_{2}>t,\tau
_{3}>t|\mathcal{F}_{t}\right)  \right) \\
&  =\mathbb{E}\left(  \mathbb{P}\left(  \tau_{1}>t|\mathcal{F}_{t}\right)
\mathbb{P}\left(  \tau_{2}>t|\mathcal{F}_{t}\right)  \mathbb{P}\left(
\tau_{3}>t|\mathcal{F}_{t}\right)  \right)  .
\end{align*}
Using $\left(  \ref{tau1},\ref{tau2},\ref{tau3}\right)  $, we get:%
\[
R_{L}(t)=\mathbb{E}\left(  e^{-H\left(  t\right)  }e^{-\int_{0}^{t}A_{s}%
^{(1)}~ds}~F_{G_{t}}\left(  L-A_{t}^{\left(  2\right)  }\right)  ~\prod
_{i=1}^{N_{t}}q\left(  T_{i}\right)  \right)  ,
\]
which provides $\left(  \ref{eq_phi1}\right)  $ and next $\left(
\ref{eq_phi2}\right)  $, due to $\left(  \ref{At_i}\right)  $ and
\begin{align}
\int_{0}^{t}A_{s}^{(1)}~ds  &  =\int_{0}^{t}\sum_{i=1}^{+\infty}V_{i}%
^{(1)}\mathbf{1}_{\left\{  T_{i}\leq s\right\}  }~ds=\sum_{i=1}^{+\infty}%
V_{i}^{(1)}\int_{0}^{t}\mathbf{1}_{\left\{  T_{i}\leq s\right\}
}~ds\nonumber\\
&  =\sum_{i=1}^{+\infty}V_{i}^{(1)}(t-T_{i})\mathbf{1}_{\left\{  T_{i}\leq
t\right\}  }=\sum_{i=1}^{N_{t}}V_{i}^{(1)}(t-T_{i}).\label{int A1}%
\end{align}

\end{proof}%

\color{black}%
Based on the previous result, the only point to get the reliability $R_{L}(t)
$ is to compute $\phi_{t}(L)$. The remaining of the section is hence devoted
to the computation of $\phi_{t}(L)$. Starting from $\left(  \ref{eq_phi1}%
\right)  $ (or $\left(  \ref{eq_phi2}\right)  $), a possibility is to compute
$\phi_{t}(L)$\ through Monte-Carlo simulations of $\left(  N_{t}\right)
_{t\geq0}$ and $\left(  A_{t}\right)  _{t\geq0}$, which is simpler and quicker
than simulating trajectories of the system according to the initial model.
\color{black}%
This method is called \textbf{Method 2} in Section \ref{section Num}.

Following \cite{abdul1972nonstationary} and \cite{esary1973shock}, one may
also use a series expansion of $\phi_{t}(L)$, as provided by the following proposition.

\begin{proposition}
[Method 3, general case]\label{prop2}
\begin{equation}
\phi_{t}(L)=e^{-\Lambda(t)}\sum_{n=0}^{\infty}P_{n}\left(  t,L\right)
\frac{(\Lambda(t))^{n}}{n!}\label{eq_phi_3}%
\end{equation}
where%
\begin{equation}
P_{n}\left(  t,L\right)  =\mathbb{E}\left(  F_{G_{t}}\left(  L-\sum_{i=1}%
^{n}V_{i}^{(2)}\right)  \prod_{i=1}^{n}\left(  q(Z_{i})e^{-(t-Z_{i}%
)V_{i}^{(1)}}\right)  \right) \label{PnGen}%
\end{equation}
and $(Z_{i})_{i\geq1}$ are i.i.d. random variables with probability density
function (p.d.f.) $\frac{\lambda(x)}{\Lambda(t)}1_{[0,t]}(x)$ and independent
of $\left(  V_{i}\right)  _{i\geq1}$.
\end{proposition}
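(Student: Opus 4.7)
The natural plan is to condition on the number of shocks $N_{t}=n$ in the expression $\left( \ref{eq_phi2}\right) $ for $\phi_{t}(L)$ provided by Proposition \ref{prop1}, and then exploit the standard order-statistics property of non-homogeneous Poisson processes.

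More precisely, I would start by writing, via the law of total expectation,
\begin{equation*}
\phi_{t}(L)=\sum_{n=0}^{\infty}\mathbb{E}\left( F_{G_{t}}\Bigl(L-\sum_{i=1}^{n}V_{i}^{(2)}\Bigr)\,e^{-\sum_{i=1}^{n}V_{i}^{(1)}(t-T_{i})}\prod_{i=1}^{n}q(T_{i})\;\Big|\;N_{t}=n\right)\mathbb{P}(N_{t}=n),
\end{equation*}
using that on $\{N_{t}=n\}$ the exponent, the product and the second-coordinate sum in $\left( \ref{eq_phi2}\right) $ all truncate to $n$ terms. Then I would use the two key probabilistic facts: (i) $(V_{i})_{i\geq 1}$ is independent of $(T_{i})_{i\geq 1}$ and of $N_{t}$, so the $V_{i}$'s can be kept as they are inside the conditional expectation; and (ii) conditionally on $N_{t}=n$, the arrival times $(T_{1},\ldots,T_{n})$ are distributed as the order statistics of $n$ i.i.d. random variables $(Z_{(1)},\ldots,Z_{(n)})$ with common p.d.f. $\lambda(x)/\Lambda(t)\,\mathbf{1}_{[0,t]}(x)$.

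The crucial observation is that the integrand
\begin{equation*}
F_{G_{t}}\Bigl(L-\sum_{i=1}^{n}V_{i}^{(2)}\Bigr)\,e^{-\sum_{i=1}^{n}V_{i}^{(1)}(t-T_{i})}\prod_{i=1}^{n}q(T_{i})
\end{equation*}
is a symmetric function of the pairs $\bigl((V_{i}^{(1)},V_{i}^{(2)}),T_{i}\bigr)_{1\leq i\leq n}$ (the first factor is symmetric in the $V_{i}^{(2)}$'s; the second and third are products/symmetric sums in $(V_{i},T_{i})$). Since the $V_{i}$'s are i.i.d. and independent of the $T_{i}$'s, exchanging the ordered sample $(Z_{(1)},\ldots,Z_{(n)})$ with the i.i.d. sample $(Z_{1},\ldots,Z_{n})$ does not change the expectation. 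This allows me to replace $T_{i}$ by $Z_{i}$ under the conditional expectation, yielding exactly $P_{n}(t,L)$ as defined in $\left( \ref{PnGen}\right) $. Combining with $\mathbb{P}(N_{t}=n)=e^{-\Lambda(t)}\Lambda(t)^{n}/n!$ gives $\left( \ref{eq_phi_3}\right) $.

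The only delicate point, and the step I would write most carefully, is the symmetry/exchangeability argument allowing the passage from order statistics to an i.i.d. sample; everything else is a direct unfolding of the definitions. Fubini-type interchange of $\sum_{n}$ and $\mathbb{E}$ is immediate since all terms are non-negative (the integrand is bounded by $1$), so no convergence issue arises.
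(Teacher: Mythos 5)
Your proposal is correct and follows essentially the same route as the paper's proof: conditioning on $N_{t}=n$, invoking the order-statistics characterization of the arrival times of a non-homogeneous Poisson process, and then using the symmetry of the integrand together with the i.i.d.\ structure of the $V_{i}$'s to replace the ordered sample $(Z_{(1)},\ldots,Z_{(n)})$ by the i.i.d.\ sample $(Z_{1},\ldots,Z_{n})$. The paper states these steps in the same order, so there is no meaningful difference to report.
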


\begin{proof}
Conditioning on the Poisson process $\left(  N_{t}\right)  _{t\geq0}$, we
have:%
\[
\phi_{t}(L)=\mathbb{E}\left(  f\left(  N_{t}\right)  \right)  =\sum
_{n=0}^{+\infty}f\left(  n\right)  \frac{(\Lambda(t))^{n}}{n!}e^{-\Lambda(t)}%
\]
with%
\[
f\left(  n\right)  =\mathbb{E}\left(  F_{G_{t}}\left(  L-\sum_{i=1}^{N_{t}%
}V_{i}^{(2)}\right)  \prod_{i=1}^{N_{t}}q(T_{i})e^{-\sum_{i=1}^{N_{t}}%
(t-T_{i})V_{i}^{(1)}}|N_{t}=n\right)  .
\]
Now, given that $N_{t}=n$, the conditional joint distribution of
$(T_{1},\ldots T_{n})$ is the same as the joint distribution of the order
statistics $(Z_{\left(  1\right)  },\ldots,Z_{\left(  n\right)  })$ of $n$
i.i.d. random variables $Z_{1},\ldots,Z_{n}$ with p.d.f. $\frac{\lambda
(x)}{\Lambda(t)}1_{[0,t]}(x)$ (see \citealt{cocozza1998processus} e.g.). Using
the fact that $V_{i}=\left(  V_{i}^{\left(  1\right)  },V_{i}^{\left(
2\right)  }\right)  $ is independent of $\left(  N_{t}\right)  _{t\geq0}$, we
get:%
\[
f\left(  n\right)  =\mathbb{E}\left(  F_{G_{t}}\left(  L-\sum_{i=1}^{n}%
V_{i}^{(2)}\right)  \prod_{i=1}^{n}q(Z_{\left(  i\right)  })e^{-\sum_{i=1}%
^{n}(t-Z_{\left(  i\right)  })V_{i}^{(1)}}\right)  .
\]
Noting that the expression within the expectation is invariant through
permutation of the $Z_{i}$'s, we derive that:%
\[
f\left(  n\right)  =\mathbb{E}\left(  F_{G_{t}}\left(  L-\sum_{i=1}^{n}%
V_{i}^{(2)}\right)  \prod_{i=1}^{n}q(Z_{i})e^{-\sum_{i=1}^{n}(t-Z_{i}%
)V_{i}^{(1)}}\right)  ,
\]
which provides the result.
\end{proof}

\begin{remark}
Based on the previous result, one can see that our model is equivalent to a
classical shock model, where the shocks arrive according to a non-homogeneous
Poisson process with intensity $d\Lambda(x)$ and with conditional probability
of survival at time $t$ equal to $P_{n}\left(  t,L\right)  $, given that there
has been $n$ shocks up to time $t$.
\end{remark}

\begin{corollary}
[Method 3, independent case]\label{cor1}In the special case where $V^{\left(
1\right)  }$ and $V^{(2)}$ are independent, we get:%
\begin{equation}
\phi_{t}(L)=e^{-\Lambda(t)}\sum_{n=0}^{\infty}Q_{n}\left(  t,L\right)
\frac{(a(t))^{n}}{n!}\label{eq1}%
\end{equation}
with
\begin{equation}
Q_{n}\left(  t,L\right)  =\mathbb{E}\left(  F_{G_{t}}\left(  L-\sum_{i=1}%
^{n}V_{i}^{(2)}\right)  \right) \label{Pn indep}%
\end{equation}
and {%
\begin{equation}
a(t)=\left(  \tilde{\mu}_{1}\ast(q\lambda)\right)  (t)=%
\color{black}%
\int_{0}^{t}\tilde{\mu}_{1}(z)(q\lambda)(t-z)~dz,\label{def a(t)}%
\end{equation}
}%
\color{black}%
{where $\ast$ stands for the convolution operator, $(q\lambda)(x)=q(x)\lambda
(x)$, all }$x\geq0$%
\color{black}%
, and $\tilde{\mu}_{1}$ stands for the Laplace transform of the distribution
$\mu_{1}$ of $V^{\left(  1\right)  }$, with%
\[
\tilde{\mu}_{1}\left(  s\right)  =\int_{0}^{+\infty}e^{-xs}\mu_{1}\left(
dx\right)  \text{, all }s\geq0.
\]

\end{corollary}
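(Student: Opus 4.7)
The plan is to start from the general series expansion in Proposition \ref{prop2} and exploit the assumed independence of $V^{(1)}$ and $V^{(2)}$ to factorize the expression $P_n(t,L)$ into a product of two expectations, one of which collapses to $Q_n(t,L)$ and the other to an $n$-th power that can be absorbed into the series.

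More precisely, the first step is to split $P_n(t,L)$ in $\left(\ref{PnGen}\right)$: since the random vectors $(V_i^{(1)},V_i^{(2)})$ are i.i.d. and each $V_i^{(1)}$ is independent of $V_i^{(2)}$ under the current assumption, and since $(Z_i)_{i\geq 1}$ is independent of $(V_i)_{i\geq 1}$, the factor $F_{G_t}\bigl(L-\sum_{i=1}^n V_i^{(2)}\bigr)$ becomes independent of the factor $\prod_{i=1}^n q(Z_i)e^{-(t-Z_i)V_i^{(1)}}$. Therefore
\[
P_n(t,L)=\mathbb{E}\!\left(F_{G_t}\bigl(L-\sum_{i=1}^n V_i^{(2)}\bigr)\right)\cdot \mathbb{E}\!\left(\prod_{i=1}^n q(Z_i)e^{-(t-Z_i)V_i^{(1)}}\right)=Q_n(t,L)\cdot \mathbb{E}\!\left(\prod_{i=1}^n q(Z_i)e^{-(t-Z_i)V_i^{(1)}}\right).
\]

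The second step is to identify the remaining product expectation. Because the pairs $(Z_i,V_i^{(1)})$ are i.i.d., this expectation equals $\bigl(\mathbb{E}(q(Z_1)e^{-(t-Z_1)V_1^{(1)}})\bigr)^n$. Conditioning on $Z_1$ and using the independence of $V_1^{(1)}$ from $Z_1$, the inner expectation with respect to $V_1^{(1)}$ yields $\tilde{\mu}_1(t-Z_1)$, so that
\[
\mathbb{E}\bigl(q(Z_1)e^{-(t-Z_1)V_1^{(1)}}\bigr)=\int_0^t q(z)\,\tilde{\mu}_1(t-z)\,\frac{\lambda(z)}{\Lambda(t)}\,dz=\frac{a(t)}{\Lambda(t)},
\]
where the last equality follows from the definition $\left(\ref{def a(t)}\right)$ of $a(t)$ together with a change of variable in the convolution.

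Substituting these two identities into $\left(\ref{eq_phi_3}\right)$, the factor $(\Lambda(t))^n$ cancels with $\bigl(a(t)/\Lambda(t)\bigr)^n$ and one recovers $\left(\ref{eq1}\right)$. No real obstacle is expected here: the only mildly delicate point is checking that the definition of $a(t)$ matches the integral that appears after conditioning on $Z_1$, which is a straightforward change of variable in the convolution.
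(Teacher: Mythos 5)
Your proof is correct and follows essentially the same route as the paper: factorize $P_n(t,L)$ into $Q_n(t,L)$ times the $n$-th power of $\mathbb{E}\bigl(q(Z_1)e^{-(t-Z_1)V^{(1)}}\bigr)$, identify the latter as $a(t)/\Lambda(t)$ via the Laplace transform $\tilde\mu_1$ and the symmetry of the convolution, and substitute back into $\left(\ref{eq_phi_3}\right)$.
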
%

\color{black}%

\begin{proof}
Starting from $\left(  \ref{PnGen}\right)  $ and using the independence of all
$V_{i}^{\left(  1\right)  }$'s, $V_{i}^{\left(  2\right)  }$'s and $Z_{i}$'s,
and the identical distributions of all $V_{i}^{\left(  1\right)  }$'s and of
all $Z_{i}$'s, we get:%
\begin{equation}
P_{n}\left(  t,L\right)  =\mathbb{E}\left(  F_{G_{t}}\left(  L-\sum_{i=1}%
^{n}V_{i}^{(2)}\right)  \right)  \left(  \mathbb{E}\left(  q(Z_{1}%
)e^{-(t-Z_{1})V^{(1)}}\right)  \right)  ^{n}\label{Pn2}%
\end{equation}
with%
\begin{align*}
\mathbb{E}\left(  q(Z_{1})e^{-(t-Z_{1})V^{(1)}}\right)   &  =\frac{1}%
{\Lambda(t)}\int_{0}^{t}\lambda(z)q\left(  z\right)  \mathbb{E}\left(
e^{-(t-z)V^{(1)}}\right)  dz\\
&  =\frac{1}{\Lambda(t)}\int_{0}^{t}\left(  \lambda q\right)  (z)\tilde{\mu
}_{1}\left(  t-z\right)  dz=\frac{a\left(  t\right)  }{\Lambda\left(
t\right)  }.
\end{align*}
Substituting this expression into Eq. $\left(  \ref{Pn2}\right)  $ and next
into Eq. $\left(  \ref{PnGen}\right)  $ provides the result.
\end{proof}

\begin{example}
\label{ex1}Let $V^{(2)}$ be an exponentially distributed with mean $1/\theta$
and $V^{\left(  1\right)  }$ an independent random variable. In that case,
$\sum_{i=1}^{n}{V}_{i}^{(2)}$ is Gamma distributed with parameter $\left(
n,\theta\right)  $. This provides%
\[
Q_{n}\left(  t,L\right)  =\int_{0}^{L}F_{G_{t}}(L-x)\frac{\theta^{n}x^{n-1}%
}{(n-1)!}e^{-\theta x}~dx,\,\text{all }n\geq1.
\]
Using $Q_{0}\left(  t,L\right)  =F_{G_{t}}\left(  L\right)  $ and Eq. $\left(
\ref{eq1}\right)  $, we get:%
\[
R_{L}(t)=e^{-H(t)-\Lambda(t)}\left(  F_{G_{t}}(L)+\sum_{n=1}^{\infty}%
\frac{(a(t))^{n}}{n!}\int_{0}^{L}F_{G_{t}}(L-x)\frac{\theta^{n}x^{n-1}%
}{(n-1)!}e^{-\theta x}~dx\right)  .
\]
Taking $G_{t}=t$ for all $t\geq0$, $V^{\left(  1\right)  }=0$ and $h\left(
t\right)  =0$ as a special case, we get%
\begin{align*}
F_{G_{t}}\left(  L\right)   &  =\mathbf{1}_{\left\{  t\leq L\right\}  },\\
a(t) &  =\int_{0}^{t}\left(  q\lambda\right)  \left(  s\right)  ds
\end{align*}
and, for $t\leq L:$%
\begin{align*}
R_{L}(t) &  =e^{-\Lambda(t)}\left(  1+\sum_{n=1}^{\infty}\frac{(\int_{0}%
^{t}\left(  q\lambda\right)  \left(  s\right)  ds)^{n}}{n!}\int_{0}^{L-t}%
\frac{\theta^{n}x^{n-1}}{(n-1)!}e^{-\theta x}~dx\right) \\
&  =e^{-\Lambda(t)}\left(  1+\sum_{n=1}^{\infty}\frac{(\int_{0}^{t}\left(
q\lambda\right)  \left(  s\right)  ds)^{n}}{n!}\sum_{k=n}^{+\infty}%
\frac{\left(  \theta\left(  L-t\right)  \right)  ^{k}}{k!}e^{-\theta\left(
L-t\right)  }\right)
\end{align*}
using successive integrations by parts for the last integral. This last
expression is the result of Theorem 2 in \cite{cha2009terminating}, which
hence appears as a special case of the previous results.
\end{example}

From Proposition \ref{prop2} and Corollary \ref{cor1}, one may derive the
following approximation for $\phi_{t}\left(  L\right)  $.

\begin{corollary}
[Method 3, approximation]\label{Cor Method3}For $N\geq0$, let $\phi_{t}%
^{N}(L)$ be defined by
\[
\phi_{t}^{N}(L)=e^{-\Lambda(t)}\sum_{n=0}^{N}P_{n}\left(  t,L\right)
\frac{(\Lambda(t))^{n}}{n!}%
\]
in the general case, and by
\[
\phi_{t}^{N}(L)=e^{-\Lambda(t)}\sum_{n=0}^{N}Q_{n}\left(  t,L\right)
\frac{(a(t))^{n}}{n!}%
\]
in case $V^{\left(  1\right)  }$ and $V^{(2)}$ are independent, where
$P_{n}\left(  t,L\right)  $ and $Q_{n}\left(  t,L\right)  $ are provided by
$\left(  \ref{PnGen}\right)  $ and $\left(  \ref{Pn indep}\right)  $,
respectively. Then, for all $t\geq0$, the sequence $\left(  \phi_{t}%
^{N}(L)\right)  _{N\geq1}$ increases to the limit $\phi_{t}(L)$ when
$N\rightarrow\infty$ and for all $N\geq0$, we have
\[
\phi_{t}^{N}(L)\leq\phi_{t}(L)\leq\phi_{t}^{N}(L)+\epsilon_{N}(t)
\]
where%
\[
\epsilon_{N}(t)=e^{-\Lambda(t)}\sum_{n=N+1}^{\infty}\frac{(a\left(  t\right)
)^{n}}{n!}=e^{-\left(  \Lambda(t)-a\left(  t\right)  \right)  }\mathbb{P}%
(Y_{t}>N),
\]
$a\left(  t\right)  $ is defined by $\left(  \ref{def a(t)}\right)  $ and
$Y_{t}$ is Poisson distributed with mean $a\left(  t\right)  $.
\end{corollary}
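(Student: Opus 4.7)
The plan is to derive the three claims from a single pointwise bound on $P_n(t,L)$ (which automatically covers $Q_n(t,L)$ as a special case), then read off everything from the explicit series representation of $\phi_t(L)$ given in Proposition \ref{prop2}.

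First, I would observe that every term $P_n(t,L)$ is an expectation of a non-negative random variable (since $F_{G_t}\in[0,1]$, $q\in[0,1]$, $V_i^{(1)}\geq 0$), so $P_n(t,L)\geq 0$, and the same holds for $Q_n(t,L)$. Therefore the partial sums $\phi_t^N(L)$ are non-decreasing in $N$, and $\phi_t^N(L)\leq\phi_t(L)$ with $\phi_t^N(L)\uparrow\phi_t(L)$ as $N\to\infty$ by Proposition \ref{prop2} (or Corollary \ref{cor1} in the independent case). This disposes of the monotone convergence claim and the lower inequality.

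The main step is the upper bound. I would bound $F_{G_t}(L-\sum_{i=1}^n V_i^{(2)})\leq 1$ inside the expectation defining $P_n(t,L)$, which removes all dependence on the $V_i^{(2)}$'s (and hence bypasses the delicate question of dependence between $V^{(1)}$ and $V^{(2)}$). This yields
\[
P_n(t,L)\leq \mathbb{E}\Bigl(\prod_{i=1}^{n}q(Z_i)\,e^{-(t-Z_i)V_i^{(1)}}\Bigr).
\]
Now the pairs $(Z_i,V_i^{(1)})$ are i.i.d., with $V_i^{(1)}$ independent of $Z_i$, so the right-hand side factorizes as $\bigl(\mathbb{E}(q(Z_1)e^{-(t-Z_1)V_1^{(1)}})\bigr)^n$. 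Conditioning on $Z_1$ and using the definition of the Laplace transform $\tilde\mu_1$, this single-factor expectation equals
\[
\frac{1}{\Lambda(t)}\int_0^t (q\lambda)(z)\,\tilde\mu_1(t-z)\,dz=\frac{a(t)}{\Lambda(t)},
\]
exactly as in the proof of Corollary \ref{cor1}. Hence $P_n(t,L)\leq (a(t)/\Lambda(t))^n$ (and of course $Q_n(t,L)$ satisfies the same bound, since $Q_n(t,L)\leq 1$ and $a(t)\leq\Lambda(t)$).

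Plugging this into the tail of the series gives
\[
\phi_t(L)-\phi_t^N(L)=e^{-\Lambda(t)}\sum_{n=N+1}^{\infty}P_n(t,L)\frac{(\Lambda(t))^n}{n!}\leq e^{-\Lambda(t)}\sum_{n=N+1}^{\infty}\frac{(a(t))^n}{n!}=\epsilon_N(t),
\]
which is the upper bound. The final identity is a one-line rewriting: multiplying and dividing by $e^{a(t)}$ turns the tail into $e^{-(\Lambda(t)-a(t))}\sum_{n=N+1}^{\infty}e^{-a(t)}(a(t))^n/n!=e^{-(\Lambda(t)-a(t))}\mathbb{P}(Y_t>N)$ with $Y_t\sim\mathrm{Poisson}(a(t))$. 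The only subtle point in the whole argument is the bound on $P_n(t,L)$ in the dependent case, and the key trick is simply to discard the $F_{G_t}$ factor before exploiting the independence of $V^{(1)}$ from $Z$.
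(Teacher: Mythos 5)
Your proof is correct and follows essentially the same route as the paper: discard the factor $F_{G_t}(\cdot)\leq 1$ in $P_n(t,L)$, factorize the remaining expectation over i.i.d. pairs to get $P_n(t,L)\leq (a(t)/\Lambda(t))^n$, and substitute this bound into the tail of the Poisson series. The only (cosmetic) difference is that you spell out the factorization step explicitly rather than referring back to the computation in the proof of Corollary \ref{cor1}.
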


\begin{proof}
We just look at the general case. From Proposition \ref{prop2}, we have
\[
\phi_{t}(L)-\phi_{t}^{N}(L)=e^{-\Lambda(t)}\sum_{n=N+1}^{\infty}P_{n}\left(
t,L\right)  \frac{(\Lambda(t))^{n}}{n!}.
\]
Due to $F_{G_{t}}\left(  L-\sum_{i=1}^{n}V_{i}^{(2)}\right)  \leq1$, we get:%
\[
P_{n}\left(  t,L\right)  \leq\mathbb{E}\left(  \prod_{i=1}^{n}\left(
q(Z_{i})e^{-(t-Z_{i})V_{i}^{(1)}}\right)  \right)  =\left(  \frac{a\left(
t\right)  }{\Lambda\left(  t\right)  }\right)  ^{n}%
\]
based on the proof of Corollary \ref{cor1}. This provides:%
\[
0\leq\phi_{t}(L)-\phi_{t}^{N}(L)\leq e^{-\Lambda(t)}\sum_{n=N+1}^{\infty}%
\frac{(a\left(  t\right)  )^{n}}{n!}=e^{-\left(  \Lambda(t)-a\left(  t\right)
\right)  }\sum_{n=N+1}^{\infty}e^{-a\left(  t\right)  }\frac{(a\left(
t\right)  )^{n}}{n!}%
\]
and the result.
\end{proof}

The previous proposition provides numerical bounds for $\phi_{t}(L)$, which
may be adjusted as tight as necessary, taking $N$ large enough.
\color{black}%
Also, the required number of terms in the truncated series is given, to get a
specified precision. This method is quite adapted as soon as it is possible to
compute the $P_{n}\left(  t,L\right)  $'s (or the $Q_{n}\left(  t,L\right)
$'s). This mostly requires the distribution of ${\sum_{i=1}^{n}V_{i}^{(2)}}$
to be known in full form, which is the case e.g. when the $V_{i}^{\left(
2\right)  }$'s are constant or Gamma distributed. An example is provided in
Example \ref{ex1}, in the special case of an exponential distribution. In the
most general case, the computation of the $P_{n}\left(  t,L\right)  $'s (or of
the $Q_{n}\left(  t,L\right)  $'s) may be as difficult as the initial problem
of computing $\phi_{t}\left(  L\right)  $, so that the previous method is not
always adapted.

We finally present another method based on Laplace transform, which does not
suffer from the same restriction.%
\color{black}%

\begin{theorem}
[Method 4]\label{prop3}We have:%
\begin{equation}
\tilde{\phi}_{t}(s)=\tilde{F}_{G_{t}}(s)\tilde{\nu}_{t}(s)\text{, all }%
s\geq0\label{eq_phi3}%
\end{equation}
or equivalently
\begin{equation}
\phi_{t}(L)=\left(  F_{G_{t}}\ast\nu_{t}\right)  (L)\text{, all }%
L\geq0,\label{eq_phi_4}%
\end{equation}
where $\nu_{t}$ is provided by its Laplace transform
\begin{equation}
\tilde{\nu}_{t}(s)=e^{-\Lambda(t)+\left(  (q\lambda)\ast\tilde{\mu}%
(\cdot,s)\right)  (t)},\label{nu}%
\end{equation}
with $\tilde{\mu}$ the bivariate Laplace transform of the distribution $\mu$
of $\left(  V^{\left(  1\right)  },V^{\left(  2\right)  }\right)  :$%
\[
\tilde{\mu}(u,s)=\iint_{\mathbb{R}_{+}^{2}}e^{-uv_{1}-sv_{2}}\mu(dv_{1}%
,dv_{2})\text{, all }u,s\geq0,
\]
and $\tilde{\mu}(\cdot,s):u\rightarrow\tilde{\mu}(u,s)$, all $s\geq0$.\newline
In the special case where $V^{\left(  1\right)  }$ and $V^{(2)}$ are
independent, $\tilde{\nu}_{t}(s)$ may be simplified into:%
\[
\tilde{\nu}_{t}(s)=e^{-\Lambda(t)+a(t)\tilde{\mu}_{2}(s)}%
\]
where $a\left(  t\right)  $ is provided by $\left(  \ref{def a(t)}\right)  $
and where $\tilde{\mu}_{2}(s)$ is the univariate Laplace transform of $\mu
_{2}$.
\end{theorem}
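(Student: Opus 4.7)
The plan is to isolate the $G_t$-dependence of $\phi_t(L)$ into a convolution, and then reduce the rest to a standard compound-Poisson Laplace transform computation. Starting from $(\ref{eq_phi2})$, set $W_t = e^{-\sum_{i=1}^{N_t} V_i^{(1)}(t-T_i)}\prod_{i=1}^{N_t}q(T_i)$ and define the sub-probability measure $\nu_t$ on $[0,\infty)$ by $\int f\,d\nu_t = \mathbb{E}[f(A_t^{(2)})\,W_t]$ for bounded Borel $f$. Since $G_t$ is independent of $\mathcal{F}_t$ (and in particular of $A_t^{(2)}$ and $W_t$), one gets
\[
\phi_t(L)=\mathbb{E}\!\left[F_{G_t}(L-A_t^{(2)})\,W_t\right]=\int_0^L F_{G_t}(L-y)\,\nu_t(dy)=(F_{G_t}\ast\nu_t)(L),
\]
which is $(\ref{eq_phi_4})$. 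Taking Laplace(-Stieltjes) transforms in $L$ on both sides turns the convolution into a product and yields $(\ref{eq_phi3})$.

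It then remains to identify $\tilde{\nu}_t(s)=\mathbb{E}[e^{-sA_t^{(2)}}W_t]$. I would condition on $N_t=n$ and invoke the classical order-statistics property of the non-homogeneous Poisson process (as already used in the proof of Proposition \ref{prop2}): replacing $(T_1,\ldots,T_n)$ by i.i.d.\ copies $(Z_1,\ldots,Z_n)$ with density $\lambda(x)/\Lambda(t)\,\mathbf{1}_{[0,t]}(x)$ is legitimate because the integrand is symmetric in the $T_i$'s. Using the independence of the $V_i$'s among themselves and from the $Z_i$'s, together with the definition of the bivariate Laplace transform $\tilde{\mu}$, one obtains
\[
\mathbb{E}\!\left[q(Z_1)\,e^{-(t-Z_1)V^{(1)}-sV^{(2)}}\right]=\frac{1}{\Lambda(t)}\int_0^t (q\lambda)(z)\,\tilde{\mu}(t-z,s)\,dz=\frac{((q\lambda)\ast\tilde{\mu}(\cdot,s))(t)}{\Lambda(t)}.
\]
Multiplying this quantity $n$ times, summing against the Poisson weights $e^{-\Lambda(t)}\Lambda(t)^n/n!$ and recognizing the exponential series gives exactly formula $(\ref{nu})$.

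For the independent special case, $\tilde{\mu}(u,s)=\tilde{\mu}_1(u)\tilde{\mu}_2(s)$ factors out of the convolution, leaving $((q\lambda)\ast\tilde{\mu}(\cdot,s))(t)=\tilde{\mu}_2(s)\,((q\lambda)\ast\tilde{\mu}_1)(t)=\tilde{\mu}_2(s)\,a(t)$ by definition $(\ref{def a(t)})$ and the commutativity of $\ast$, giving the simplified form of $\tilde{\nu}_t(s)$.

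The main obstacle I anticipate is purely bookkeeping: one must be careful that $\nu_t$ is only a sub-probability measure (the factors $q(T_i)$ and $e^{-V_i^{(1)}(t-T_i)}$ are at most $1$), so that $\phi_t$ behaves as a sub-distribution function rather than a CDF, and to justify the convolution/Laplace identity at the level of Laplace-Stieltjes transforms on $[0,\infty)$. Exchange of summation and expectation in the last step needs only a Fubini/monotone argument since all integrands are non-negative and bounded by $1$, so no delicate convergence issue arises.
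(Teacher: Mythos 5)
Your proof is correct, and the second half takes a genuinely different route from the paper's. You both reduce the problem (after separating $F_{G_t}$ as a convolution factor) to evaluating $\tilde{\nu}_t(s)=\mathbb{E}\bigl[e^{-sA_t^{(2)}}W_t\bigr]$ with $W_t=e^{-\sum_{i\leq N_t}V_i^{(1)}(t-T_i)}\prod_{i\leq N_t}q(T_i)$, but from there you diverge. The paper views the triples $(V_n^{(1)},V_n^{(2)},T_n)$ as atoms of a Poisson random measure on $\mathbb{R}_+^3$ with intensity $\mu(dv_1,dv_2)\,\lambda(w)\,dw$, writes $\tilde{\nu}_t(s)=\mathbb{E}[e^{-M\psi_{s,t}}]$ with $\psi_{s,t}(v_1,v_2,w)=\bigl((t-w)v_1+sv_2-\ln q(w)\bigr)\mathbf{1}_{\{w\leq t\}}$, and invokes the Laplace-functional (Campbell) formula $\mathbb{E}[e^{-M\psi}]=\exp\bigl(-\iiint(1-e^{-\psi})\,d\nu\bigr)$ to get the exponential directly. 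You instead condition on $N_t=n$, exploit the order-statistics representation of the non-homogeneous Poisson process (the same tool the paper already uses for Proposition~\ref{prop2}), factor the expectation using the i.i.d.\ structure of the $(V_i,Z_i)$'s to get $\bigl(((q\lambda)\ast\tilde{\mu}(\cdot,s))(t)/\Lambda(t)\bigr)^n$, and then sum the Poisson weights to recognize the exponential series. Your route is more elementary (no Poisson-random-measure formalism needed) and unifies Methods~3 and~4 under a single conditioning argument; the paper's route is shorter once one has the Campbell formula and reveals the structural fact that $\tilde{\nu}_t$ is a Laplace functional of a PRM. A further small difference: you establish the convolution identity $(\ref{eq_phi_4})$ directly by introducing $\nu_t$ as a sub-probability measure and using the independence of $G_t$ from $\mathcal{F}_t$, then deduce $(\ref{eq_phi3})$; the paper proves $(\ref{eq_phi3})$ first by computing $\int_0^\infty e^{-sL}F_{G_t}(L-A_t^{(2)})\,dL$ and calls $(\ref{eq_phi_4})$ a consequence. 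Your care about $\nu_t$ being only a sub-probability measure and about the mixed (ordinary/Stieltjes) Laplace transforms is appropriate and matches what the paper implicitly does.
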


\begin{proof}
Remembering that $A_{t}^{(2)}=\sum_{i=1}^{N_{t}}V_{i}^{(2)}$, we get from
$\left(  \ref{eq_phi2}\right)  $ that:%
\begin{align}
\tilde{\phi}_{t}(s) &  =\int_{0}^{\infty}e^{-sL}~\mathbb{E}\left[  F_{G_{t}%
}\left(  L-A_{t}^{(2)}\right)  e^{-\sum_{i=1}^{N_{t}}V_{i}^{(1)}(t-T_{i}%
)}\prod_{i=1}^{N_{t}}q(T_{i})\right]  ~dL\nonumber\\
&  =\mathbb{E}\left[  \left(  \int_{0}^{\infty}e^{-sL}F_{G_{t}}\left(
L-A_{t}^{(2)}\right)  ~dL\right)  e^{-\sum_{i=1}^{N_{t}}V_{i}^{(1)}(t-T_{i}%
)}\prod_{i=1}^{N_{t}}q(T_{i})\right] \label{lap fi}%
\end{align}
with%
\[
\int_{0}^{\infty}e^{-sL}F_{G_{t}}\left(  L-A_{t}^{(2)}\right)  ~dL=\int%
_{A_{t}^{(2)}}^{\infty}e^{-sL}F_{G_{t}}\left(  L-A_{t}^{(2)}\right)  ~dL
\]
because $G_{t}$ is non negative. Setting $w=L-A_{t}^{(2)}$, we obtain
\[
\int_{A_{t}^{(2)}}^{\infty}e^{-sL}F_{G_{t}}\left(  L-A_{t}^{(2)}\right)
~dL=e^{-sA_{t}^{(2)}}\int_{0}^{\infty}e^{-sw}F_{G_{t}}(w)~dw=e^{-sA_{t}^{(2)}%
}\tilde{F}_{G_{t}}(s).
\]
Substituting this expression into $\left(  \ref{lap fi}\right)  $ provides:%
\begin{equation}
\tilde{\phi}_{t}(s)=\tilde{F}_{G_{t}}(s)~\theta\left(  s\right)
\label{lap phi}%
\end{equation}
with%
\begin{align*}
\theta\left(  s\right)   &  =\mathbb{E}\left[  e^{-s\sum_{i=1}^{N_{t}}%
V_{i}^{(2)}}e^{-\sum_{i=1}^{N_{t}}V_{i}^{(1)}(t-T_{i})}\prod_{i=1}^{N_{t}%
}q(T_{i})\right] \\
&  =\mathbb{E}\left[  e^{-\sum_{i=1}^{N_{t}}\left(  V_{i}^{(1)}(t-T_{i}%
)+sV_{i}^{(2)}-\ln q(T_{i})\right)  }\right]  .
\end{align*}
Because $i\leq N_{t}$ is equivalent to $T_{i}\leq t$, we get:%
\[
\theta\left(  s\right)  =\mathbb{E}\left(  e^{-\sum_{i=1}^{\infty}\psi
_{s,t}(V_{i}^{(1)},V_{i}^{(2)},T_{i})}\right)
\]
with%
\begin{equation}
\psi_{s,t}(v_{1},v_{2},w)=\left(  (t-w)v_{1}+sv_{2}-\ln q(w)\right)
\mathbf{1}_{\{w\leq t\}}.\label{psi}%
\end{equation}
Noting that the sequence $\left(  V_{n}^{\left(  1\right)  },V_{n}^{\left(
2\right)  },T_{n}\right)  _{n\geq0}$ are the points of a Poisson random
measure $M$ with intensity $\nu\left(  dv_{1},dv_{2},dw\right)  =\mu
(dv_{1},dv_{2})\lambda(w)dw$, the function $\theta\left(  s\right)  $ may be
interpreted as a Laplace functional with respect of $M:$
\[
\theta\left(  s\right)  =\mathbb{E}\left(  e^{-M\psi_{s,t}}\right)  .
\]

The formula for Laplace functionals of Poisson random measures \cite[Theorem
2.9]{ccnlar2011probability} next provides:%
\begin{equation}
\theta\left(  s\right)  =\exp\left(  -\iiint_{\mathbb{R}_{+}^{3}}\left(
1-e^{-\psi_{s,t}}\right)  d\nu\right) \label{theta}%
\end{equation}
with%
\[
\iiint_{\mathbb{R}_{+}^{3}}\left(  1-e^{-\psi_{s,t}}\right)  d\nu
=\iiint_{\mathbb{R}_{+}^{3}}\left(  1-e^{-\psi_{s,t}(v_{1},v_{2},w)}\right)
\mu(dv_{1},dv_{2})\lambda(w)dw.
\]
Substituting $\psi_{s,t}$ by its expression $\left(  \ref{psi}\right)  $, we
get:%
\begin{align*}
&  \iiint_{\mathbb{R}_{+}^{3}}\left(  1-e^{-\psi_{s,t}}\right)  d\nu\\
&  =\int_{0}^{t}\left(  \iint_{\mathbb{R}_{+}^{2}}\left(  1-e^{-\left(
(t-w)v_{1}+sv_{2}-\ln q(w)\right)  )}\right)  \mu(dv_{1},dv_{2})\right)
\lambda(w)~dw\\
&  =\int_{0}^{t}\left(  1-q\left(  w\right)  \iint_{\mathbb{R}_{+}^{2}%
}e^{-\left(  (t-w)v_{1}+sv_{2}\right)  )}\mu(dv_{1},dv_{2})\right)
\lambda(w)~dw\\
&  =\Lambda\left(  t\right)  -\int_{0}^{t}\left(  q\lambda\right)  \left(
w\right)  \tilde{\mu}(t-w,s)~dw\\
&  =\Lambda(t)-\left[  (q\lambda)\ast(\tilde{\mu}(\cdot,s))\right]  (t).
\end{align*}
Substituting this expression into $\left(  \ref{theta}\right)  $ and next into
$\left(  \ref{lap phi}\right)  $ provides
\[
\tilde{\phi}_{t}(s)=\tilde{F}_{G_{t}}(s)\tilde{\nu}_{t}(s),
\]
with $\tilde{\nu}_{t}(s)$ given by $\left(  \ref{nu}\right)  $. Equation
$\left(  \ref{eq_phi_4}\right)  $ is a direct consequence.

Finally, in case $V^{\left(  1\right)  }$ and $V_{1}^{(2)}$ are independent,
we have:%
\[
\tilde{\mu}(w,s)=\tilde{\mu}_{1}(w)\tilde{\mu}_{2}(s)
\]
and%
\begin{equation}
\tilde{\nu}_{t}(s)=e^{-\Lambda(t)+\left(  (q\lambda)\ast\tilde{\mu}%
_{1}\right)  (t)~\tilde{\mu}_{2}(s)}=e^{-\Lambda(t)+a(t)\tilde{\mu}_{2}%
(s)},\label{eq3}%
\end{equation}
which ends this proof.
\end{proof}

Based on the previous result, one can compute $\phi_{t}(L)$ by inverting its
Laplace transform with respect of $L$. Looking at Equation $\left(
\ref{eq_phi_4}\right)  $, the key point is the inversion of the Laplace
transform $\tilde{\nu}_{t}(s)$.
\color{black}%
We next provide an example where the inversion is possible in full form. In
the most general case,
\color{black}%
this can be done numerically using some Laplace inversion software.

\begin{example}
Let $h=0$, $G_{t}=0$ (all $t\geq0$), $\lambda$ and $q$ constant, $V^{\left(
1\right)  }=V^{\left(  2\right)  }$ identically exponentially distributed with
mean $1/\theta$ (so that $V^{\left(  1\right)  }$ and $V^{\left(  2\right)  }$
are completely dependent). Then:%
\[
\tilde{\mu}(u,s)=\iint_{\mathbb{R}_{+}^{2}}e^{-uv_{1}-sv_{2}}\theta e^{-\theta
v_{1}}~dv_{1}~\delta_{v_{1}}\left(  dv_{2}\right)  =\frac{\theta}{u+s+\theta},
\]
where $\delta_{v_{1}}$ stands for the Dirac mass at $v_{1}$. We easily get:%
\[
\left(  (q\lambda)\ast\tilde{\mu}(\cdot,s)\right)  (t)=q\lambda\int_{0}%
^{t}\frac{\theta}{u+s+\theta}du=q\lambda\theta~\ln\left(  \frac{t+s+\theta
}{s+\theta}\right)
\]
and%
\[
\tilde{\nu}_{t}(s)=e^{-\lambda t}\left(  1+\frac{t}{s+\theta}\right)
^{q\lambda\theta}.
\]
For $t<\theta$, we have
\[
\tilde{\nu}_{t}(s)=e^{-\lambda t}\left(  1+\sum_{n=0}^{\infty}\binom
{q\lambda\theta}{n+1}\left(  \frac{t}{s+\theta}\right)  ^{n+1}\right)
\]
where
\[
\binom{q\lambda\theta}{n}=\frac{q\lambda\theta(q\lambda\theta-1)\ldots
(q\lambda\theta-n+1)}{n!}.
\]
Inverting the Laplace transform $\tilde{\nu}_{t}(s)$, we obtain:
\[
\nu_{t}(dx)=e^{-\lambda t}\left(  \delta_{0}(dx)+\sum_{n=0}^{\infty}%
\binom{q\lambda\theta}{n+1}\frac{t^{n+1}}{n!}e^{-\theta x}x^{n}~dx\right)  .
\]
As $F_{G_{t}}=1$, we get the following full form for the reliability:%
\begin{align*}
R_{L}(t) &  =(1\ast\nu_{t})(L)\\
&  =e^{-\lambda t}\left(  1+\sum_{n=0}^{\infty}\binom{q\lambda\theta}%
{n+1}\left(  \frac{t}{\theta}\right)  ^{n+1}F_{n+1,\theta}(L)\right)  ,
\end{align*}
where $F_{n+1,\theta}$ is the cumulative distribution function of a gamma
distributed random variable with parameter $(n+1,\theta)$.
\end{example}

In the special case where $V^{\left(  1\right)  }$ and $V^{(2)}$ are
independent, the Laplace inversion of $\tilde{\nu}_{t}(s)$ is reduced to
inverting $e^{a(t)\tilde{\mu}_{2}(s)}$, or equivalently to inverting
$e^{C~\tilde{\mu}_{2}(s)}$, where $C$ is a constant.
\color{black}%
This is hence easier than in the most general case of correlated $V^{\left(
1\right)  }$ and $V^{(2)}$.

\medskip

To sum up the section, we have at our disposal four different methods for
computing the reliability:

\begin{description}
\item[\textbf{Method 1} (Direct MC simulations)]
\color{black}%
The main drawbacks of this method are that it suffers from long computation
times and that its implementation is less direct than for the other methods.

\item[\textbf{Method 2} (Computing $\phi_{t}\left(  L\right)  $ through
formula $\left(  \ref{eq_phi2}\right)  $ and MC simulations)] This method is
much quicker and much easier to implement than Method 1. Besides, it is always
possible to use it. However, Method 3 (when possible) and Method 4 are quicker.

\item[\textbf{Method 3} (Truncated series expansion + control of the
truncation error through Corollary \ref{Cor Method3})] This method provides
very good results as soon as the $P_{n}\left(  t,L\right)  $'s (or the
$Q_{n}\left(  t,L\right)  $'s) are available in full form.

\item[\textbf{Method 4} (Laplace transform inversion)] This method is the best
when it is possible to inverse the Laplace transform $\tilde{\nu}_{t}(s) $ in
full form. Numerical Laplace inversion also provides quite good results.
\end{description}%

\color{black}%

\section{%
\color{black}%
An ageing property for the system lifetime}

Let us recall that a random variable $Z$ (or $\bar{F}_{Z}$) is New Better than
Used (NBU) if
\begin{equation}
\mathbb{P}(Z>s+t)\leq\mathbb{P}(Z>s)\mathbb{P}(Z>t),\label{NBU}%
\end{equation}
all $s,t\geq0$. We here provide sufficient conditions under which $\tau$ is NBU.

\begin{theorem}
\label{propNBU}%
\color{black}%
Assume that the intrinsic lifetimes of both components are NBU, which means
that:
\begin{align}
e^{-H\left(  s+t\right)  }  & \leq e^{-H\left(  s\right)  }e^{-H\left(
t\right)  }\text{, all }s,t\geq0,\label{NBU1}\\
F_{G_{t+s}}\left(  l\right)   & \leq F_{G_{t}}\left(  l\right)  F_{G_{s}%
}\left(  l\right)  \text{, all }l,s,t\geq0,\label{NBU2}%
\end{align}
where the second condition is true as soon as $(G_{t})_{t\geq0}$ is a
univariate non negative L\'{e}vy process.%
\color{black}
\newline Then, $\tau$ is NBU if one among the two following conditions is satisfied:

\begin{enumerate}
\item $q$ is non increasing and $\lambda$ is constant,

\item $q$ is constant and $\Lambda$ is super-additive ($\Lambda(x+y)\geq
\Lambda(x)+\Lambda(y)$, all $x,y\geq0$).
\end{enumerate}
\end{theorem}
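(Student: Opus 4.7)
My plan is to reduce the NBU inequality to $\phi_{s+t}(L) \leq \phi_s(L)\,\phi_t(L)$, since Proposition \ref{prop1} gives $R_L(u) = e^{-H(u)}\phi_u(L)$ and $(\ref{NBU1})$ supplies the multiplicative bound for the $H$-factor. Starting from representation $(\ref{eq_phi2})$, I would split the shock indices on $[0,s+t]$ into those with $T_i \in [0,s]$ (the first $N_s$) and those with $T_i \in (s,s+t]$, writing $A_{s+t}^{(2)} = B_1 + B_2$ with $B_1 = \sum_{i \leq N_s} V_i^{(2)}$ and $B_2 = \sum_{N_s < i \leq N_{s+t}} V_i^{(2)}$.

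Three elementary bounds then separate the two intervals inside the expectation. First, $(\ref{NBU2})$ combined with monotonicity of $F_{G_\cdot}$ and nonnegativity of $V^{(2)}$ yields $F_{G_{s+t}}(L - B_1 - B_2) \leq F_{G_s}(L - B_1)\,F_{G_t}(L - B_2)$. Second, for $i \leq N_s$ the nonnegativity of $V^{(1)}$ gives $e^{-V_i^{(1)}(s+t-T_i)} \leq e^{-V_i^{(1)}(s-T_i)}$. Third, for $i > N_s$ I rewrite $s+t-T_i = t-(T_i-s)$. Using the independence of Poisson increments on disjoint intervals and of the i.i.d.\ $V_i$'s, the resulting upper bound factors as $\phi_s(L) \cdot \Phi$, where $\Phi$ has the same form as $\phi_t(L)$ except that its shock times $\tilde T_j := T_{N_s+j} - s$ come from a Poisson process on $[0,t]$ with intensity $\lambda(s+\cdot)$ and the fatality factors read $q(\tilde T_j + s)$ rather than $q(\tilde T_j)$.

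It remains to prove $\Phi \leq \phi_t(L)$ under either condition. Under Condition~1, $\lambda$ constant makes the shifted process equal in law to a standard process on $[0,t]$, while $q$ non-increasing gives $q(\tilde T_j + s) \leq q(\tilde T_j)$ termwise, so the bound is immediate. Under Condition~2, $q$ constant makes the fatality factors agree, and super-additivity yields $\Lambda(s+u) - \Lambda(s) \geq \Lambda(u)$ for all $u \in [0,t]$. I would then couple the shifted and standard processes via the time-change $T_n^{(i)} = \Lambda_i^{-1}(U_1 + \cdots + U_n)$ driven by a common sequence of i.i.d.\ Exp$(1)$ variables $U_n$: since $\Lambda_2 \geq \Lambda_1$, this gives $T_n^{(2)} \leq T_n^{(1)}$ and $N^{(2)}(t) \geq N^{(1)}(t)$ almost surely, i.e., the shifted process has more and earlier shocks. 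Pairing each shock index to the same mark $V_i$, the integrand is non-increasing both when an existing shock is advanced (through the exponential factor, since $V^{(1)} \geq 0$) and when an additional shock is inserted (through $q \leq 1$, the $F_{G_t}$ factor, and the exponential), giving $\Phi \leq \phi_t(L)$ pointwise in the coupling, hence in expectation.

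The main technical hurdle is Condition~2: a naive attempt to write the shifted process as a standard process superposed with an independent ``difference'' process fails because $\Lambda(s+\cdot) - \Lambda(s) - \Lambda(\cdot)$ need not be non-decreasing under super-additivity alone. The index-by-index time-change coupling above circumvents this difficulty and shows that super-additivity of $\Lambda$ --- rather than the stronger pointwise inequality $\lambda(s+\cdot) \geq \lambda(\cdot)$ --- is enough.
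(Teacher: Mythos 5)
Your proof is correct and follows essentially the same route as the paper: reduce to the submultiplicativity $\phi_{s+t}(L)\leq\phi_s(L)\phi_t(L)$, split the expectation across the two intervals, bound the $F_{G}$, exponential and $q$ factors termwise, factor by independence into $\phi_s(L)$ times a shifted-process quantity, and compare the latter with $\phi_t(L)$. The only noticeable difference is in Condition~2: the paper cites Shaked~\&~Shanthikumar (Theorem 6.B.40) for the stochastic ordering of arrival-time vectors under $\Lambda(\cdot+t)-\Lambda(t)\geq\Lambda(\cdot)$, whereas you produce an explicit monotone coupling via the $\Lambda^{-1}$ time change driven by common exponential inter-arrivals --- a more elementary realization of the same ordering, and your remark that a superposition argument would fail under super-additivity alone is a correct and worthwhile observation.
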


\begin{proof}%
\color{black}%
Let us first note that, in case $(G_{t})_{t\geq0}$ is a univariate non
negative L\'{e}vy process, we have:%
\begin{align*}
F_{G_{t+s}}\left(  u\right)   &  =\mathbb{P}\left(  G_{t}+\left(
G_{t+s}-G_{t}\right)  \leq u\right) \\
&  \leq\mathbb{P}\left(  G_{t}\leq u,G_{t+s}-G_{t}\leq u\right) \\
&  =\mathbb{P}\left(  G_{t}\leq u\right)  \mathbb{P}\left(  G_{t+s}-G_{t}\leq
u\right) \\
&  =F_{G_{t}}\left(  u\right)  F_{G_{s}}\left(  u\right)
\end{align*}
due to the independent and homogenous increments of $(G_{t})_{t\geq0}$ for the
third line. Assumption $\left(  \ref{NBU2}\right)  $ is hence true.%
\color{black}%

Starting again from
\[
\mathbb{P}(\tau>t)=e^{-H\left(  t\right)  }\phi_{t}(L),
\]
and based on the NBU assumption $\left(  \ref{NBU1}\right)  $, it is
sufficient to show that $\phi_{t}(L)$ is NBU. Now:
\begin{align*}
& \phi_{s+t}(L)\\
& =\mathbb{E}\left(  F_{G_{t+s}}\left(  L-\sum_{i=1}^{N_{t+s}}V_{i}%
^{(2)}\right)  e^{-\sum_{i=1}^{N_{t+s}}V_{i}^{(1)}(t+s-T_{i})}\prod
_{i=1}^{N_{t+s}}q(T_{i})\right) \\
& \leq\mathbb{E}\left(  F_{G_{t}}\left(  L-\sum_{i=1}^{N_{t+s}}V_{i}%
^{(2)}\right)  \times F_{G_{s}}\left(  L-\sum_{i=1}^{N_{t+s}}V_{i}%
^{(2)}\right)  e^{-\sum_{i=1}^{N_{t+s}}V_{i}^{(1)}(t+s-T_{i})}\prod
_{i=1}^{N_{t+s}}q(T_{i})\right)
\end{align*}

due to the second NBU assumption $\left(  \ref{NBU2}\right)  $.

Under each of the two provided conditions, $q$ is non increasing so that
$q(T_{i})\leq q(T_{i}-t)$, all $i\geq N_{t}+1$. Using
\begin{align*}
-\sum_{i=1}^{N_{t}}V_{i}^{(1)}(t+s-T_{i}) &  \leq-\sum_{i=1}^{N_{t}}%
V_{i}^{(1)}(t-T_{i}),\\
L-\sum_{i=1}^{N_{t+s}}V_{i}^{(2)} &  \leq L-\sum_{i=1}^{N_{t}}V_{i}^{(2)},\\
L-\sum_{i=1}^{N_{t+s}}V_{i}^{(2)} &  \leq L-\sum_{i=N_{t}+1}^{N_{t+s}}%
V_{i}^{(2)},
\end{align*}
and splitting the exponential and the product into two parts, one gets:%
\begin{align}
\phi_{s+t}(L) &  \leq\mathbb{E}\left[  F_{G_{t}}\left(  L-\sum_{i=1}^{N_{t}%
}V_{i}^{(2)}\right)  e^{-\sum_{i=1}^{N_{t}}V_{i}^{(1)}(t-T_{i})}\prod
_{i=1}^{N_{t}}q(T_{i})\right. \nonumber\\
&  \times\left.  F_{G_{s}}\left(  L-\sum_{i=N_{t}+1}^{N_{t+s}}V_{i}%
^{(2)}\right)  e^{-\sum_{i=N_{t}+1}^{N_{t+s}}V_{i}^{(1)}(t+s-T_{i})}%
\prod_{i=N_{t}+1}^{N_{t+s}}q(T_{i}-t)\right]  .\label{Edfi}%
\end{align}
Setting
\[
T_{i}^{(t)}=T_{N_{t}+i}-t\,,\,\text{all }i\geq1,
\]
then $\left(  T_{n}^{(t)}\right)  _{n\geq1}$ are points of the Poisson process
$\left(  N_{s}^{\left(  t\right)  }=N_{t+s}-N_{t}\right)  _{s\geq0}$ with
admits $\lambda(t+x)~dx$ for intensity. Equation $\left(  \ref{Edfi}\right)  $
now writes:%
\begin{align*}
\phi_{s+t}(L) &  \leq\mathbb{E}\left[  F_{G_{t}}\left(  L-\sum_{i=1}^{N_{t}%
}V_{i}^{(2)}\right)  e^{-\sum_{i=1}^{N_{t}}V_{i}^{(1)}(t-T_{i})}\prod
_{i=1}^{N_{t}}q(T_{i})\right. \\
&  \times\left.  F_{G_{s}}\left(  L-\sum_{j=1}^{N_{s}^{\left(  t\right)  }%
}V_{j+N_{t}}^{(2)}\right)  e^{-\sum_{j=1}^{N_{s}^{\left(  t\right)  }%
}V_{j+N_{t}}^{(1)}(s-T_{j}^{\left(  t\right)  })}\prod_{j=1}^{N_{s}^{\left(
t\right)  }}q(T_{j}^{\left(  t\right)  })\right]  ,
\end{align*}
or equivalently:%
\begin{align*}
\phi_{s+t}(L) &  \leq\sum_{n=0}^{+\infty}\mathbb{E}\left[  \mathbf{1}%
_{\left\{  N_{t}=n\right\}  }F_{G_{t}}\left(  L-\sum_{i=1}^{N_{t}}V_{i}%
^{(2)}\right)  e^{-\sum_{i=1}^{N_{t}}V_{i}^{(1)}(t-T_{i})}\prod_{i=1}^{N_{t}%
}q(T_{i})\right. \\
&  \times\left.  F_{G_{s}}\left(  L-\sum_{j=1}^{N_{s}^{\left(  t\right)  }%
}V_{j+n}^{(2)}\right)  e^{-\sum_{j=1}^{N_{s}^{\left(  t\right)  }}%
V_{j+n}^{(1)}(s-T_{j}^{\left(  t\right)  })}\prod_{j=1}^{N_{s}^{\left(
t\right)  }}q(T_{j}^{\left(  t\right)  })\right]  .
\end{align*}
As $\left(  N_{s}^{\left(  t\right)  }\right)  _{s\geq0}$ is independent on
$\left(  N_{u}\right)  _{u\leq t}$ and as the $V_{i}$'s are i.i.d. and
independent on $\left(  N_{u}\right)  _{u\leq t}$, one gets:%
\[
\phi_{s+t}(L)\leq\sum_{n=1}^{+\infty}a_{n}b_{n}%
\]
with%
\begin{align*}
a_{n} &  =\mathbb{E}\left[  \mathbf{1}_{\left\{  N_{t}=n\right\}  }F_{G_{t}%
}\left(  L-\sum_{i=1}^{N_{t}}V_{i}^{(2)}\right)  e^{-\sum_{i=1}^{N_{t}}%
V_{i}^{(1)}(t-T_{i})}\prod_{i=1}^{N_{t}}q(T_{i})\right] \\
b_{n} &  =\mathbb{E}\left[  F_{G_{s}}\left(  L-\sum_{j=1}^{N_{s}^{\left(
t\right)  }}V_{j+n}^{(2)}\right)  e^{-\sum_{j=1}^{N_{s}^{\left(  t\right)  }%
}V_{j+n}^{(1)}(s-T_{j}^{\left(  t\right)  })}\prod_{j=1}^{N_{s}^{\left(
t\right)  }}q(T_{j}^{\left(  t\right)  })\right]  \text{.}%
\end{align*}
Noting that $b_{n}$ is independent on $n$ ($b_{n}=b_{0}$, all $n\geq0$) and
that $\sum_{n=1}^{+\infty}a_{n}=\phi_{t}(L)$, we finally have%
\[
\phi_{s+t}(L)\leq\phi_{t}(L)\times\phi_{s}^{\left(  t\right)  }\left(
L\right)
\]
where
\[
\phi_{s}^{\left(  t\right)  }\left(  L\right)  =b_{0}=\mathbb{E}\left[
F_{G_{s}}\left(  L-\sum_{j=1}^{N_{s}^{\left(  t\right)  }}V_{j}^{(2)}\right)
e^{-\sum_{j=1}^{N_{s}^{\left(  t\right)  }}V_{j}^{(1)}(s-T_{j}^{\left(
t\right)  })}\prod_{j=1}^{N_{s}^{\left(  t\right)  }}q(T_{j}^{\left(
t\right)  })\right]  .
\]
The point now is to prove that $\phi_{s}^{\left(  t\right)  }\left(  L\right)
\leq\phi_{s}\left(  L\right)  $ under the two different assumptions.

\begin{enumerate}
\item If $\lambda$ is a constant, then $\left(  N_{s}^{\left(  t\right)
}\right)  _{s\geq0}$ is identically distributed as $(N_{u})_{u\geq0}$. We
hence have%
\[
\phi_{s}^{\left(  t\right)  }\left(  L\right)  =\phi_{s}\left(  L\right)
\]
and the result is clear.

\item If $q$ is constant, then:%
\[
\phi_{s}^{\left(  t\right)  }\left(  L\right)  =\mathbb{E}\left[  f_{\infty
}\left(  \left(  T_{i}^{(t)}\right)  _{i=1}^{\infty}\right)  \right]  ,
\]
where
\[
f_{n}\left(  \left(  t_{i}\right)  _{i=1}^{n}\right)  =\mathbb{E}\left(
F_{G_{s}}\left(  L-\sum_{j=1}^{n}V_{j}^{(2)}\mathbf{1}_{\{t_{j}\leq
s\}}\right)  e^{-\sum_{j=1}^{n}V_{j}^{(1)}(s-t_{j})\mathbf{1}_{\{t_{j}\leq
s\}}}q^{\sum_{j=1}^{n}\mathbf{1}_{\{t_{j}\leq s\}}}\right)
\]
for $n\in\mathbb{N}^{\ast}\mathbb{\cup}\left\{  \infty\right\}  $. Moreover,
the respective cumulated intensities of $(N_{u})_{u\geq0}$ and $\left(
N_{u}^{(t)}\right)  _{u\geq0}$ are $\Lambda(x)$ and $\Lambda(x+t)-\Lambda(t)$,
with%
\[
\Lambda(x+t)-\Lambda(t)\geq\Lambda(x)\text{, all }t,x\geq0
\]
due to the super-additivity of $\Lambda(x)$. We derive from \cite[Theorem
6.B.40, Example 6.B.41]{shaked2006stochastic} that
\[
\left(  T_{i}\right)  _{i=1}^{n}\geq_{sto}\left(  T_{i}^{(t)}\right)
_{i=1}^{n}%
\]
for all $n\geq1$, where $\geq_{sto}$ stand for the standard stochastic order.
As $f_{n}$ is non decreasing with respect to each $t_{i}$, we get that:%
\[
\mathbb{E}\left[  f_{n}\left(  \left(  T_{i}\right)  _{i=1}^{n}\right)
\right]  \geq\mathbb{E}\left[  f_{n}\left(  \left(  T_{i}^{(t)}\right)
_{i=1}^{n}\right)  \right]
\]
for each $n\in\mathbb{N}^{\ast}$. Setting $n\rightarrow+\infty$, we derive by
Lebesgue's dominated convergence theorem that%
\[
\lim\limits_{n\rightarrow+\infty}\mathbb{E}\left[  f_{n}\left(  \left(
T_{i}\right)  _{i=1}^{n}\right)  \right]  =\phi_{s}\left(  L\right)  \geq
\lim\limits_{n\rightarrow+\infty}\mathbb{E}\left[  f_{n}\left(  \left(
T_{i}^{(t)}\right)  _{i=1}^{n}\right)  \right]  =\phi_{s}^{\left(  t\right)
}\left(  L\right)  ,
\]
which achieves the proof.
\end{enumerate}
\end{proof}%

\color{black}%
The conditions of the previous theorem means that

\begin{enumerate}
\item the probability for a shock to be non fatal decreases with time (and
$\lambda$ is constant),

\item the cumulated rate of shocks arrivals is larger at time $t$ than at time
$t=0$ (and $q$ is constant).
\end{enumerate}

Such conditions hence mean that the environment is more and more stressing, or
that it is more stressing after a while than at the beginning. Such conditions
are quite natural.

\section{%
\color{black}%
Influence of the dependence induced by the stressing environment on the system
lifetime}%

\color{black}%
We here study the influence on the lifetime $\tau$ of different parameters of
the stressing environment: we study the influence of probability $q\left(
\cdot\right)  $, of the dependence between $V^{\left(  1\right)  }$ and
$V^{(2)}$ and of the cumulated intensity function $\Lambda$. We hence look at
the influence on the lifetime $\tau$ of all characteristics of the stressing
environment which make the components dependent.
\color{black}%
The influence of $q\left(  \cdot\right)  $ is straightforward. We mention it
for sake of completeness.

\subsection{Influence of $q\left(  \cdot\right)  $ on the lifetime $\tau$}

Let us consider two different systems with identical parameters except from
$q\left(  \cdot\right)  $ (first system) and $\tilde{q}\left(  \cdot\right)  $
(second system) and such that $q(w)\leq\tilde{q}(w)$ for all $w\geq0$. Then,
adding a tilde ($\sim$) to any quantity referring to the second system, we
directly get from $\left(  \ref{eq_phi1}\right)  $ that
\[
R_{L}(t)\leq\tilde{R}_{L}(t),\ \text{all }t\geq0,
\]
or equivalently that $\tau$ is smaller than $\tilde{\tau}$ in the sense of the
standard stochastic order $(\tau\leq_{st}\tilde{\tau}):$%
\begin{equation}
\mathbb{P}\left(  \tau>t\right)  \leq\mathbb{P}\left(  \tilde{\tau}>t\right)
\text{, all }t\geq0.\label{tau sto}%
\end{equation}
As expected, the lifetime $\tau$ is hence increasing with the probability
$q\left(  \cdot\right)  $ for a shock to be non fatal.

\subsection{Influence of the dependence between $V^{\left(  1\right)  }$ and
$V^{(2)}$ on the lifetime $\tau$}

We here study the influence of the dependence between the two marginal
increments $V^{\left(  1\right)  }$ and $V^{(2)}$ on the lifetime $\tau$. To
measure the dependence level between $V^{\left(  1\right)  }$ and $V^{(2)}$,
we use the lower (or upper) orthant order, where we recall that $V=\left(
V^{\left(  1\right)  },V^{(2)}\right)  $ is said to be smaller than $\tilde
{V}=\left(  \tilde{V}^{\left(  1\right)  },\tilde{V}^{(2)}\right)  $ in the
lower orthant order ($V\leq_{lo}\tilde{V}$) if
\begin{equation}
\mathbb{P}\left(  V^{\left(  1\right)  }\leq x_{1},V^{(2)}\leq x_{2}\right)
\leq\mathbb{P}\left(  \tilde{V}^{\left(  1\right)  }\leq x_{1},\tilde{V}%
^{(2)}\leq x_{2}\right)  \text{, all }x_{1},x_{2}\in\mathbb{R},\label{lo}%
\end{equation}
or equivalently if
\begin{equation}
\mathbb{P}\left(  V^{\left(  1\right)  }>x_{1},V^{(2)}>x_{2}\right)
\leq\mathbb{P}\left(  \tilde{V}^{\left(  1\right)  }>x_{1},\tilde{V}%
^{(2)}>x_{2}\right)  \text{, all }x_{1},x_{2}\in\mathbb{R}.\label{uo}%
\end{equation}

\begin{proposition}
\label{prop_dependence}Let us consider two different systems, with identical
parameters except from $\left(  V^{\left(  1\right)  },V^{(2)}\right)  $
(first system) and $\left(  \tilde{V}^{\left(  1\right)  },\tilde{V}%
^{(2)}\right)  $ (second system). As previously, a tilde ($\sim$) is added to
any quantity referring to the second system. Assume that $\left(  V^{\left(
1\right)  },V^{(2)}\right)  \leq_{lo}\left(  \tilde{V}^{\left(  1\right)
},\tilde{V}^{(2)}\right)  $. Then $\tau$ is smaller than $\tilde{\tau}$ in the
sense of the standard stochastic order $(\tau\leq_{st}\tilde{\tau})$.
\end{proposition}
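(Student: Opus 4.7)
The plan is to reduce $R_L(t)\leq\tilde R_L(t)$ to a pointwise CDF comparison between two convolutions of sub-probability measures on $\mathbb{R}_+$, and then to establish that comparison with a Lehmann-type inequality for the lower orthant order. Since $R_L(t)=e^{-H(t)}\phi_t(L)$ by $(\ref{reliability})$ and $H$ does not depend on $V$, it is enough to show $\phi_t(L)\leq\tilde\phi_t(L)$ for every $t,L\geq 0$.

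I would start from $(\ref{eq_phi2})$ and condition on $(G_t,N_t,T_1,\ldots,T_{N_t})$. The i.i.d.\ assumption on $(V_i)_{i\geq 1}$ together with Fubini rewrites the conditional expectation as $\prod_i q(T_i)\cdot(\mu_{c_1}\ast\cdots\ast\mu_{c_{N_t}})([0,L-G_t])$, where $c_i=t-T_i\geq 0$ and $\mu_c$ is the sub-probability measure on $\mathbb{R}_+$ defined by
\[
\mu_c(B)=\mathbb{E}\bigl[e^{-cV^{(1)}}\mathbf{1}_{\{V^{(2)}\in B\}}\bigr].
\]
The target then reduces to the claim that $(\mu_{c_1}\ast\cdots\ast\mu_{c_n})([0,\ell])\leq(\tilde\mu_{c_1}\ast\cdots\ast\tilde\mu_{c_n})([0,\ell])$ for every $n\geq 0$ and $c_1,\ldots,c_n,\ell\geq 0$.

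The heart of the matter is the inequality $\mu_c([0,x])\leq\tilde\mu_c([0,x])$ for all $x\geq 0$. Here I would invoke the classical Lehmann-type inequality: whenever $V\leq_{lo}\tilde V$ and $f_1,f_2$ are nonnegative and nonincreasing,
\[
\mathbb{E}[f_1(V^{(1)})f_2(V^{(2)})]\leq\mathbb{E}[f_1(\tilde V^{(1)})f_2(\tilde V^{(2)})].
\]
This is obtained by writing $f_i(v)=f_i(\infty)+\int\mathbf{1}_{\{v\leq a\}}\,|df_i|(a)$ and applying $(\ref{lo})$ inside the resulting double integral. Taking $f_1(v_1)=e^{-cv_1}$ and $f_2(v_2)=\mathbf{1}_{\{v_2\leq x\}}$ delivers the required CDF bound on $\mu_c$. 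Moreover, the equivalence of $(\ref{lo})$ and $(\ref{uo})$ forces $V$ and $\tilde V$ to share the same marginals, so $\mu_c$ and $\tilde\mu_c$ have a common total mass $\tilde\mu_1(c)$.

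With equal total mass and the pointwise CDF domination in hand, the normalized probability measures sit in the usual univariate stochastic order, which is preserved under convolution of independent summands; undoing the normalization by the common factor $\prod_i\tilde\mu_1(c_i)$ yields the sub-probability comparison. Integrating back over $(G_t,N_t,T_i)$ then gives $\phi_t(L)\leq\tilde\phi_t(L)$, and hence $\tau\leq_{st}\tilde\tau$. The main obstacle I anticipate is the careful justification of the Lehmann step---especially since $f_2=\mathbf{1}_{\{\cdot\leq x\}}$ yields a Dirac $|df_2|$---together with making explicit the equal-marginals assumption hidden in the dual definition of $\leq_{lo}$; the remaining ingredients (Fubini and preservation of the stochastic order under convolution) are routine.
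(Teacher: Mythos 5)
Your argument is correct, and it takes a genuinely different, more elementary route than the paper. The paper conditions on $(N_t,T_1,\dots,T_{N_t})$ and then works at the \emph{bivariate} level: it invokes the closure of $\leq_{lo}$ under componentwise increasing maps and under sums of independent random vectors (Shaked--Shanthikumar Theorem 6.G.3) to push the orthant comparison from $(V_i^{(1)},V_i^{(2)})$ up to $\bigl(\sum_i(t-t_i)V_i^{(1)},\sum_iV_i^{(2)}\bigr)$, and finally applies the characterization of $\leq_{lo}$ via products of nonnegative nonincreasing functions $k_1,k_2$. You instead marginalize out $V^{(1)}$ immediately via the sub-probability kernel $\mu_c(B)=\mathbb{E}[e^{-cV^{(1)}}\mathbf{1}_{\{V^{(2)}\in B\}}]$, turning the whole comparison into a \emph{univariate} statement: a pointwise CDF ordering $\mu_c([0,x])\leq\tilde\mu_c([0,x])$ (obtained from a direct Lehmann-type computation with $f_1(v_1)=e^{-cv_1}$, $f_2(v_2)=\mathbf{1}_{\{v_2\leq x\}}$, which is exactly the same test-function characterization the paper cites, unpacked by hand), followed by preservation of stochastic order under convolution. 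Both proofs rest on the same two pillars---the monotone-product characterization of $\leq_{lo}$ and closure under independent sums---but the paper keeps everything in the bivariate orthant framework while you reduce to a one-dimensional stochastic-order problem. Your version is more self-contained; the paper's is shorter because it cites the closure theorems. One small point: your appeal to the "equivalence" of $(\ref{lo})$ and $(\ref{uo})$ to force equal marginals, and hence equal total mass of $\mu_c$ and $\tilde\mu_c$, is actually unnecessary (and the paper's stated equivalence of the two conditions is, strictly speaking, true only under equal marginals). Condition $(\ref{lo})$ alone yields $\mu_c(\mathbb{R}_+)\leq\tilde\mu_c(\mathbb{R}_+)$ as well as $\mu_c([0,x])\leq\tilde\mu_c([0,x])$; a Fubini argument then gives $\int g\,d\mu_{c_2}\leq\int g\,d\tilde\mu_{c_2}$ for any nonnegative nonincreasing $g$, which is all you need to propagate the inequality through the convolution without normalizing to probability measures. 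This makes your proof robust to the weaker hypothesis $(\ref{lo})$, matching what the paper's own proof actually uses.
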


\begin{proof}
The point is to show that $\phi_{t}(L)\leq\tilde{\phi}_{t}(L)$. Starting from
$\left(  \ref{eq_phi2}\right)  $ and conditioning on $\sigma\left(  \left(
N_{t}\right)  _{t\geq0}\right)  $, we have
\[
\phi_{t}(L)=\mathbb{E}\left(  \Theta(N_{t},T_{1},\ldots,T_{N_{t}})\right)
\]
where
\begin{align*}
&  \Theta(n,t_{1},\ldots,t_{n})\\
&  =\mathbb{E}\left(  F_{G_{t}}\left(  L-\sum_{i=1}^{N_{t}}V_{i}^{(2)}\right)
\prod_{i=1}^{N_{t}}q(T_{i})e^{-\sum_{i=1}^{N_{t}}V_{i}^{(1)}(t-T_{i}%
)}\left\vert N_{t}=n,T_{1}=t_{1},\ldots,T_{n}=t_{n}\right.  \right) \\
&  =\mathbb{E}\left(  F_{G_{t}}\left(  L-\sum_{i=1}^{n}V_{i}^{(2)}\right)
\prod_{i=1}^{n}q(t_{i})e^{-\sum_{i=1}^{n}V_{i}^{(1)}(t-t_{i})}\right) \\
&  =\mathbb{E}\left(  k_{1}\left(  \sum_{i=1}^{n}V_{i}^{(2)}\right)
k_{2}\left(  \sum_{i=1}^{n}V_{i}^{(1)}(t-t_{i})\right)  \right)
\end{align*}
for all $n\in\mathbb{N}$, all $0\leq t_{1}\leq\ldots\leq t_{n}\leq t$, with
\begin{align*}
k_{1}\left(  v_{1}\right)   &  =F_{G_{t}}\left(  L-v_{1}\right)  ,\\
k_{2}\left(  v_{2}\right)   &  =\prod_{i=1}^{n}q(t_{i})e^{-v_{2}}.
\end{align*}
Based on the independence between $\left(  V^{\left(  1\right)  }%
,V^{(2)}\right)  $ and $\left(  N_{t}\right)  _{t\geq0}$, it is sufficient to
show that
\[
\Theta(n,t_{1},\ldots,t_{n})\leq\tilde{\Theta}(n,t_{1},\ldots,t_{n}),
\]
all $n\in\mathbb{N}$, all $0\leq t_{1}\leq\ldots\leq t_{n}\leq t$ to get
$\phi_{t}(L)\leq\tilde{\phi}_{t}(L)$. As $((t-t_{i})x_{1},x_{2})$ is non
decreasing in $x_{1}$ and $x_{2}$, we first know from \cite[Theorem
6.G.3.]{shaked2006stochastic} that
\[
\left(  (t-t_{i})V_{i}^{(1)},V_{i}^{(2)}\right)  \leq_{lo}\left(
(t-t_{i})\tilde{V}_{i}^{(1)},\tilde{V}_{i}^{(2)}\right)  ,
\]
all $1\leq i\leq n$. As $\left(  (t-t_{i})V_{i}^{(1)},V_{i}^{(2)}\right)
_{i=\overline{1,n}}$ and $\left(  (t-t_{i})\tilde{V}_{i}^{(1)},\tilde{V}%
_{i}^{(2)}\right)  _{i=\overline{1,n}}$ are two sequences of independent
random vectors, we derive from the same theorem that
\[
\left(  \sum_{i=1}^{n}V_{i}^{(1)}(t-T_{i}),\sum_{i=1}^{n}V_{i}^{(2)}\right)
\leq_{lo}\left(  \sum_{i=1}^{n}\tilde{V}_{i}^{(1)}(t-T_{i}),\sum_{i=1}%
^{n}\tilde{V}_{i}^{(2)}\right)  .
\]
As both functions $k_{1}$ and $k_{2}$ are non decreasing, we derive (same
theorem) that
\[
\Theta(n,t_{1},\ldots,t_{n})\leq\tilde{\Theta}(n,t_{1},\ldots,t_{n}),
\]
which achieves this proof.
\end{proof}

The previous result shows that the more dependent $V^{\left(  1\right)  }$ and
$V^{(2)}$ are, the larger the system lifetime is.

\subsection{Influence of the cumulated intensity function $\Lambda$ on the
lifetime $\tau$}

We finally study how the lifetime of the system depends on the frequency of shocks.

\begin{proposition}
\label{influence_lambda}Let us consider two different systems, with identical
parameters except from $\Lambda$ (first system) and $\tilde{\Lambda}$ (second
system). As previously, a tilde ($\sim$) is added to any quantity referring to
the second system. Assume that $\Lambda\geq\tilde{\Lambda}$, and that $q$ is
non decreasing. Then $\tau$ is smaller than $\tilde{\tau}$ in the sense of the
standard stochastic order $(\tau\leq_{st}\tilde{\tau})$.
\end{proposition}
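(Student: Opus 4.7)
The plan is to reduce the comparison of $R_L(t)$ and $\tilde R_L(t)$ to a pointwise comparison, by writing $\phi_t(L)$ as the expectation of a monotone deterministic functional of the shock process and then coupling the two Poisson processes via a time change. Since $R_L(t)=e^{-H(t)}\phi_t(L)$ and $H$ does not involve $\Lambda$, it suffices to prove $\phi_t(L)\le\tilde\phi_t(L)$. Starting from $(\ref{eq_phi2})$ and integrating out first the sequence $(V_i)$, which is independent of the shock process, I would introduce the deterministic function
$$
\Psi(s_1,\dots,s_n)=\mathbb{E}\!\left[F_{G_t}\!\left(L-\sum_{i=1}^{n}V_i^{(2)}\right)\prod_{i=1}^{n}e^{-V_i^{(1)}(t-s_i)}\,q(s_i)\right]
$$
on $[0,t]^n$, with $\Psi(\cdot)=F_{G_t}(L)$ when $n=0$. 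Conditioning on $\sigma((N_u)_{u\le t})$ then gives $\phi_t(L)=\mathbb{E}[\Psi(T_1,\dots,T_{N_t})]$.

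Next I would establish two monotonicity properties of $\Psi$. First, for each $n$, $\Psi$ is coordinatewise non-decreasing on $[0,t]^n$: shifting $s_i$ to the right raises $q(s_i)$ (since $q$ is non-decreasing) and $e^{-V_i^{(1)}(t-s_i)}$ (since $V_i^{(1)}\ge 0$), while leaving $\sum V_i^{(2)}$ untouched. Second, appending a point decreases $\Psi$: conditioning on $V_1,\dots,V_n$ and using that $F_{G_t}(L-\sum_{i=1}^n V_i^{(2)}-V_{n+1}^{(2)})\le F_{G_t}(L-\sum_{i=1}^n V_i^{(2)})$, $e^{-V_{n+1}^{(1)}(t-s_{n+1})}\le 1$ and $q(s_{n+1})\le 1$, one gets $\Psi(s_1,\dots,s_n,s_{n+1})\le\Psi(s_1,\dots,s_n)$.

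To exploit $\Lambda\ge\tilde\Lambda$, I would use the standard time-change coupling of the two Poisson processes: let $0<S_1<S_2<\cdots$ be the arrivals of a rate-$1$ homogeneous Poisson process and set $T_i=\Lambda^{-1}(S_i)$, $\tilde T_i=\tilde\Lambda^{-1}(S_i)$. Since $\Lambda\ge\tilde\Lambda$ entails $\Lambda^{-1}\le\tilde\Lambda^{-1}$, this coupling yields $T_i\le\tilde T_i$ almost surely for every $i$, and in particular $N_t\ge\tilde N_t$ a.s. Combining the two monotonicity properties then gives
$$
\Psi(T_1,\dots,T_{N_t})\;\le\;\Psi(T_1,\dots,T_{\tilde N_t})\;\le\;\Psi(\tilde T_1,\dots,\tilde T_{\tilde N_t}),
$$
the first inequality obtained by dropping the $N_t-\tilde N_t$ extra points one by one, the second by moving each remaining $T_i$ rightward to $\tilde T_i$. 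Taking expectations produces $\phi_t(L)\le\tilde\phi_t(L)$, hence $R_L(t)\le\tilde R_L(t)$ for every $t\ge 0$, which is exactly $\tau\le_{st}\tilde\tau$.

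The delicate point, and the reason a direct thinning argument would not work, is that $\Lambda\ge\tilde\Lambda$ compares only the cumulated intensities pointwise and does \emph{not} imply $\lambda\ge\tilde\lambda$, so a naive superposition coupling is not available. The time-change construction is the right substitute because it delivers simultaneously $N_t\ge\tilde N_t$ and $T_i\le\tilde T_i$ from a single source of randomness. The hypothesis that $q$ is non-decreasing then enters solely to secure the first monotonicity property of $\Psi$: since the coupling forces the common shocks of the $\Lambda$-system to occur earlier than those of the $\tilde\Lambda$-system, one needs "earlier" to mean "more detrimental to reliability", which is exactly what non-decreasing $q$ provides.
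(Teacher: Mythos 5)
Your proof is correct, and at its core it follows the same strategy as the paper: reduce to showing $\phi_t(L)\le\tilde\phi_t(L)$, express $\phi_t(L)$ as the expectation of a deterministic functional of the arrival times, establish the relevant monotonicity (for which the non-decreasing $q$ is exactly what is needed), and use $\Lambda\ge\tilde\Lambda$ to order the arrival times. The differences are in presentation. The paper defines a single functional $g_n\bigl((t_i)_{i=1}^n\bigr)=\mathbb{E}\bigl(F_{G_t}\bigl(L-\sum_i V_i^{(2)}\mathbf{1}_{\{t_i\le t\}}\bigr)e^{\sum_i(\ln q(t_i)-V_i^{(1)}(t-t_i))\mathbf{1}_{\{t_i\le t\}}}\bigr)$ that carries the indicator $\mathbf{1}_{\{t_i\le t\}}$, so that pushing an arrival beyond $t$ is automatically encoded as ``dropping'' the shock; this makes $g_n$ coordinatewise non-decreasing on all of $[0,\infty)^n$ and a single application of the multivariate stochastic order (citing Shaked--Shanthikumar, Theorem 6.B.40/Example 6.B.41, as in Theorem~\ref{propNBU}) finishes the argument. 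You instead restrict the functional $\Psi$ to $[0,t]^n$, split its monotonicity into two explicit lemmas (non-decreasing in each coordinate, and non-increasing under appending a point), and replace the citation by the explicit time-change coupling $T_i=\Lambda^{-1}(S_i)$, $\tilde T_i=\tilde\Lambda^{-1}(S_i)$ from a common rate-one Poisson process, which delivers simultaneously $T_i\le\tilde T_i$ and $N_t\ge\tilde N_t$ almost surely. Your route is more self-contained (it effectively proves the stochastic-order fact the paper only cites) and your side remark, that $\Lambda\ge\tilde\Lambda$ does not imply $\lambda\ge\tilde\lambda$ and so a thinning coupling is unavailable, is a useful observation that the paper leaves implicit; the paper's version is shorter because the indicator absorbs both the ``more shocks'' and the ``earlier shocks'' effects into one monotonicity statement and offloads the coupling to a reference.
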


\begin{proof}
Using a similar method as for the proof of Theorem \ref{propNBU}, we can write
$\phi_{t}(L)$ as
\[
\phi_{t}(L)=\mathbb{E}\left(  g_{\infty}\left(  T_{i}\right)  _{i=1}^{\infty
}\right)
\]
where
\[
g_{n}\left(  (t_{i})_{i=1}^{n}\right)  =\mathbb{E}\left(  F_{G_{t}}\left(
L-\sum_{i=1}^{n}V_{i}^{(2)}\mathbf{1}_{\left\{  t_{i}\leq t\right\}  }\right)
e^{\sum_{i=1}^{n}\left(  \ln\left(  q\left(  t_{i}\right)  \right)
-V_{i}^{(1)}(t-t_{i})\right)  \mathbf{1}_{\left\{  t_{i}\leq t\right\}  }%
}\right)
\]
is non decreasing with respect to each $t_{i}$, all $1\leq i\leq n$. As
$\Lambda\geq\tilde{\Lambda}$, we derive in the same way that
\[
\left(  T_{i}\right)  _{i=1}^{n}\leq_{sto}\left(  \tilde{T}_{i}\right)
_{i=1}^{n}%
\]
for all $n\geq1$, which allows to conclude.
\end{proof}

This result is very natural. The more frequent the shocks occur, the shorter
the lifetime is.

\section{Numerical experiments\label{section Num}}

\subsection{Validation of the results}

As already mentioned in Subsection \ref{rel}, a first possibility for
computing the system reliability $R_{L}(t)$ is to use classical Monte-Carlo
(MC) simulations (Method 1) and simulate a large number of independent
histories for the system up to time $t$. We here provide the algorithm that we
have used, considering the case where $h\left(  t\right)  =0$, all $t\geq0$
and where $V^{\left(  1\right)  }$ is almost surely positive ($\mathbb{P}%
\left(  V^{\left(  1\right)  }>0\right)  =1$) (not essential assumptions).

\begin{algorithm}
\label{algo}Repeat $M$ times with $M$ large enough:

\begin{enumerate}
\item Simulate $G_{t}$ with given distribution.

\item Simulate $N_{t}$ according to the Poisson distribution with parameter
$\Lambda(t)$.

\item Simulate $N_{t}$ i.i.d. random variables $W_{1},\ldots,W_{n}$ with
p.d.f. $\frac{\lambda(x)}{\Lambda(t)}1_{[0,t]}(x)$. The shock arrival times
are given by
\[
(T_{1},\ldots,T_{N_{t}})=(W_{(1)},\ldots,W_{(N_{t})}),
\]
where $(W_{(1)},\ldots,W_{(N_{t})})$ is the order statistics of $(W_{1}%
,\ldots,W_{N_{t}})$.

\item Simulate $Z_{i}$ as the result of a Bernoulli trial between a fatal (0)
and a non fatal shock (1) at time $T_{i}$, with probability $q(T_{i})$ for a
shock to be non fatal, all $i=1,\ldots,N_{t}$.

\item Simulate $N_{t}$ i.i.d. random vectors $\left(  V_{i}^{(1)},V_{i}%
^{(2)}\right)  $, $i=1,\ldots,N_{t}$ according to distribution $\mu$.

\item Simulate the lifetime $Y$ of the first component with conditional hazard
rate $A_{t}^{\left(  1\right)  }$ given $\mathcal{F}_{t}=\sigma\left(
A_{s},s\leq t\right)  $. With that aim, setting
\[
\kappa\left(  t\right)  =\int_{0}^{t}A_{s}^{\left(  1\right)  }ds=\sum
_{i=1}^{N_{t}}(t-T_{i})V_{i}^{(1)}%
\]
(see $\left(  \ref{int A1}\right)  $), $\kappa\left(  t\right)  $ is a
one-to-one increasing function from $[T_{1},+\infty)$ into $[0,+\infty)$ and,
for $\kappa\left(  T_{j}\right)  \leq u<\kappa\left(  T_{j+1}\right)  $ with
$j\geq1$, we have:%
\[
\kappa^{-1}\left(  u\right)  =\frac{u+\sum_{i=1}^{j}T_{i}V_{i}^{(1)}}%
{\sum_{i=1}^{j}V_{i}^{(1)}}.
\]
It is then known that, if $U$ is uniformly distributed on $\left[  0,1\right]
$, then $\kappa^{-1}\left(  -\ln\left(  U\right)  \right)  $ is identically
distributed as $Y$, see \cite[Proposition 1.20]{cocozza1998processus}.

\item Compute
\[
w^{\left(  j\right)  }=\mathbf{1}_{\{Y>t\}}\mathbf{1}_{\{G_{t}+\sum
_{i=1}^{N_{t}}V_{i}^{(2)}\leq L\}}\prod_{i=1}^{N_{t}}Z_{i}%
\]
where $j$ refers to the $j-$th MC history, with $1\leq j\leq M$.
\end{enumerate}
\end{algorithm}

At the end of the algorithm, symbol $w^{\left(  j\right)  }$ stands for the
realization of a Bernoulli trial $W$ between an up (1) or down (0) system at
time $t$, with probability $R_{L}\left(  t\right)  $ for the system to be up.
The reliability $R_{L}\left(  t\right)  $ is then classically approximated by
the empirical mean $m_{W}$ of the $w^{\left(  j\right)  }$'s and a 95\%
asymptotic confidence interval is computed.

Method 2 is based on MC simulations of trajectories of $\left(  A_{t}\right)
_{t\geq0}$ and of $\left(  N_{t}\right)  _{t\geq0}$ (see Section \ref{rel}).
For both methods 1 \& 2, MC simulations are based on $N=10^{5}$ histories.
Methods 3 \& 4 are described in Section \ref{rel}. The four methods are
compared on a few specific examples. In all these examples, we compare the
reliability at time $t=1$ ($R(1)$) and we suppose that the shock are due to a
homogeneous Poisson process with parameter $\lambda=1$ , $L=2$, $h=0$ and
$(G_{t})_{t\geq0}$ is a null process. All other parameters are provided in
Table \ref{table4}, where $T\hookrightarrow\mathcal{E}(1)$\ means that the
random variable $T$ is exponentially distributed with mean 1.
\begin{table}[ptb]
\caption{Validation of the results}%
\label{table4}
\begin{center}%
\begin{tabular}
[c]{ccccc}\hline
Input & $\left(  V^{(1)},V^{(2)}\right)  $ & Method & $R(1)$ & 95 \%
CI\\\hline
$q(x)=e^{-x}$ &  & 1 & 0.5196 & [0.5181 0.5212]\\
$V^{(1)}\hookrightarrow\mathcal{E}(1)$, $V^{(2)}\hookrightarrow\mathcal{E}(1)
$ & Independence & 2 & 0.5195 & [0.5184 0.5205]\\
&  & 3, 4 & 0.5198 & \\\hline
$q(x)=0.5$ &  & 1 & 0.5049 & [0.5033 0.5064]\\
$V^{(1)}=V^{(2)}\hookrightarrow\mathcal{E}(1)$ & Complete dependence & 2 &
0.5049 & [0.5039 0.5059]\\
&  & 4 & 0.5054 & \\\hline
$q(x)=e^{-x}$,$V^{(2)}=V^{(1)}+W^{(2)}$ &  & 1 & 0.4809 & [ 0.4793 0.4824]\\
$V^{(1)}$, $W^{(2)}$ independent & Dependence & 2 & 0.4813 & [ 0.4801
0.4825]\\
$V^{(1)}\hookrightarrow\mathcal{E}(1)$, $W^{(2)}\hookrightarrow\mathcal{E}(1)
$ &  &  &  & \\\hline
\end{tabular}
\vspace*{0.5cm}
\end{center}
\end{table}

Methods 1 and 2 may be used in any case. Method 2 is more effective than
Method 1 (shorter c.p.u. time and tighter 95\% confidence interval - IC -).
Method 3 is more practical when $V^{(1)}$ and $V^{(2)}$ are independent with
some specific distribution. Method 4 is adapted to the case where $V^{(1)}$
and $V^{(2)}$ are dependent.

\subsection{Examples}

We here illustrate several properties from a numerical point of view on a few
examples. Examples parameters are provided in Table \ref{table3}.

\begin{table}[ptb]
\caption{Parameters for the examples}%
\label{table3}
\begin{center}%
\begin{tabular}
[c]{ccccccccc}\hline
& $L$ & $h$ & $G_{t}$ & $q(t)$ & $\lambda$ & $V^{(1)}$ & $V^{(2)}$ & $\left(
V^{(1)},V^{(2)}\right)  $\\\hline
Ex.\ref{ex9} & - & 0 & 0 & - & 1 & $\mathcal{E}(1)$ & $\mathcal{E}(1)$ &
Independent\\
Ex.\ref{ex8} & 2 & 0 & 0 & 1 & 1 & $\mathcal{E}(1)$ & $\mathcal{E}(1)$ & -\\
Ex.\ref{ex6} & 2 & 0 & 0 & $1-e^{-x}$ & - & 1 & $\mathcal{E}(1)$ &
Independent\\\hline
\end{tabular}
\vspace*{0.5cm}
\end{center}
\end{table}

\begin{example}
\label{ex9} This example illustrates the NBU property of the lifetime when
$\lambda$ is constant and $q(x)=e^{-x}$ is non increasing, see Theorem
\ref{propNBU}. Taking $L=2$, Fig. \ref{fig7_1} indeed shows that the remaining
lifetime of a system with age $t_{0}=1$ is stochastically smaller than the
lifetime of a new system. On the contrary, when $\lambda$ is still constant
but $q(x)=1-e^{-3x}$ is non decreasing, the remaining lifetime of a system
with age $t_{0}=1$ is not comparable with that of a new system, see Fig.
\ref{fig7_2} with $L=4$. So the NBU property does not hold anymore in that case.

\begin{figure}[ptb]
\centering
\includegraphics[scale=0.32,angle=0]{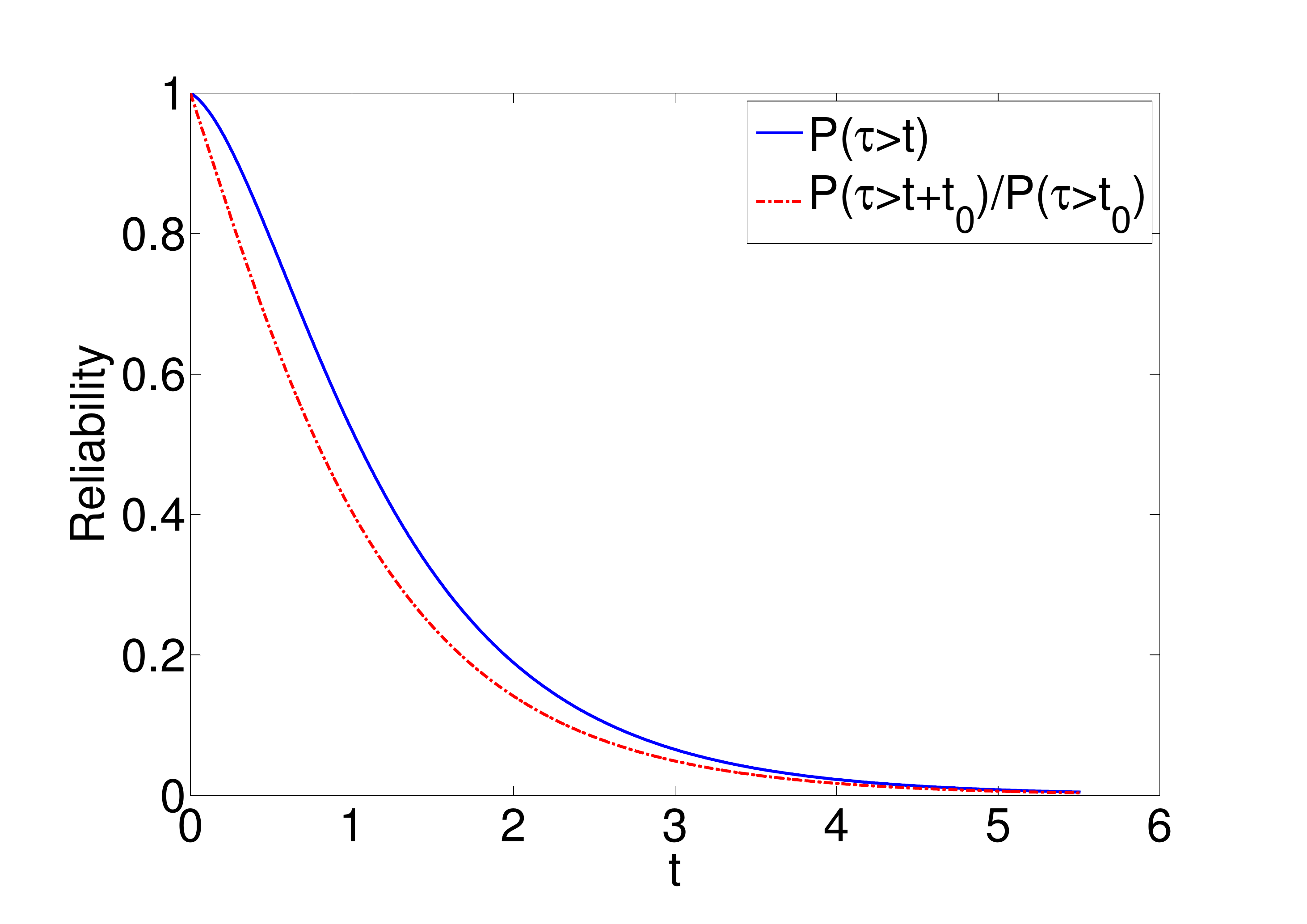} \vspace*{-0.5cm}%
\caption{Example \ref{ex9}, NBU case}%
\label{fig7_1}%
\end{figure}

\begin{figure}[ptb]
\centering\includegraphics[scale=0.32,angle=0]{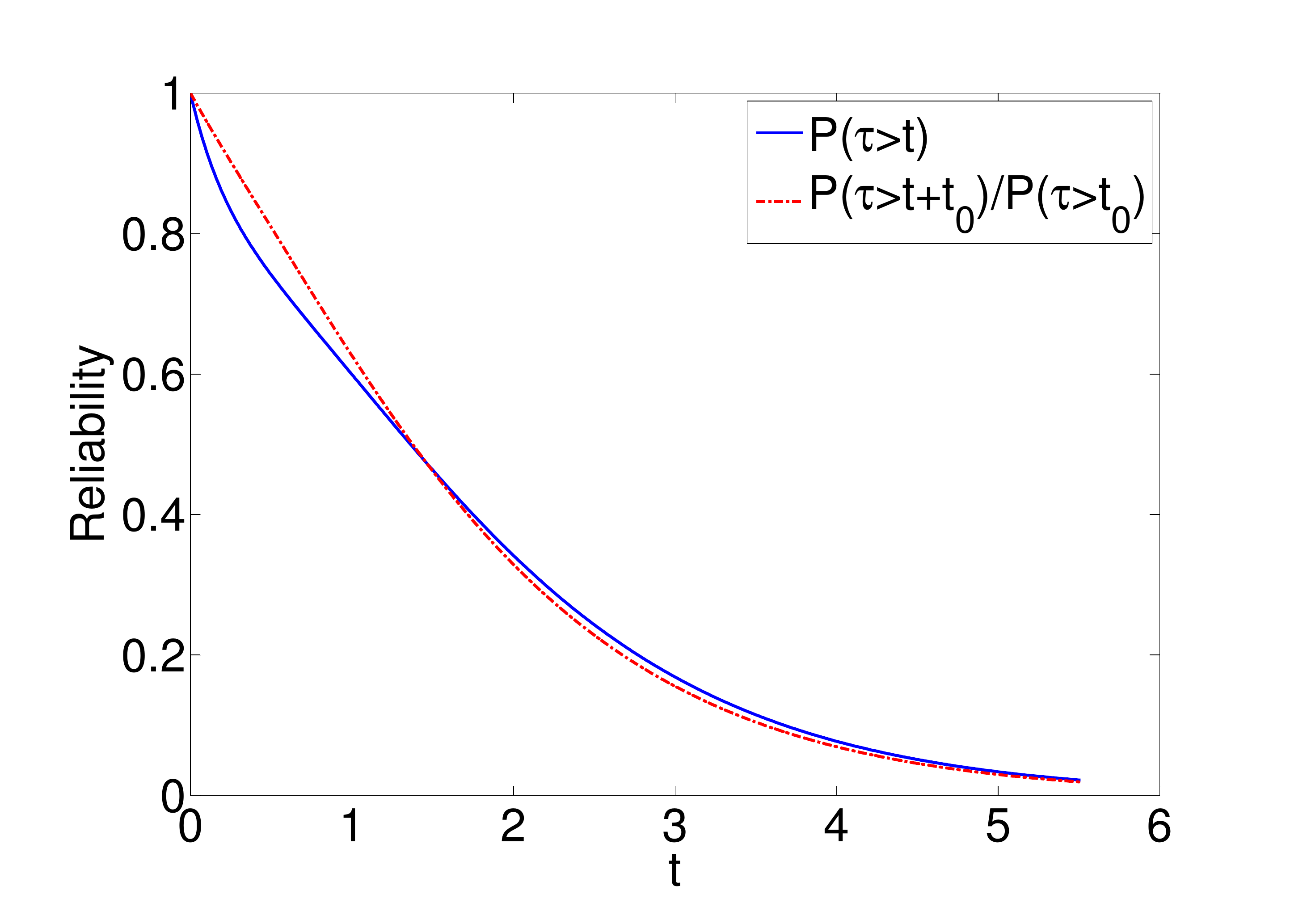}\vspace*{-0.5cm}%
\caption{Example \ref{ex9}, not NBU case}%
\label{fig7_2}%
\end{figure}
\end{example}

\begin{example}
\label{ex8}We here consider two extreme cases for the dependency between
$V^{\left(  1\right)  }$ and $V^{(2)}$: independent or completely dependent
(here $V^{\left(  1\right)  }=V^{(2)}$). The reliability in the completely
dependent case is always greater than in the independent case (Fig.
\ref{fig6}). This result is coherent with Proposition \ref{prop_dependence}.

\begin{figure}[ptb]
\centering\includegraphics[scale=0.32,angle=0]{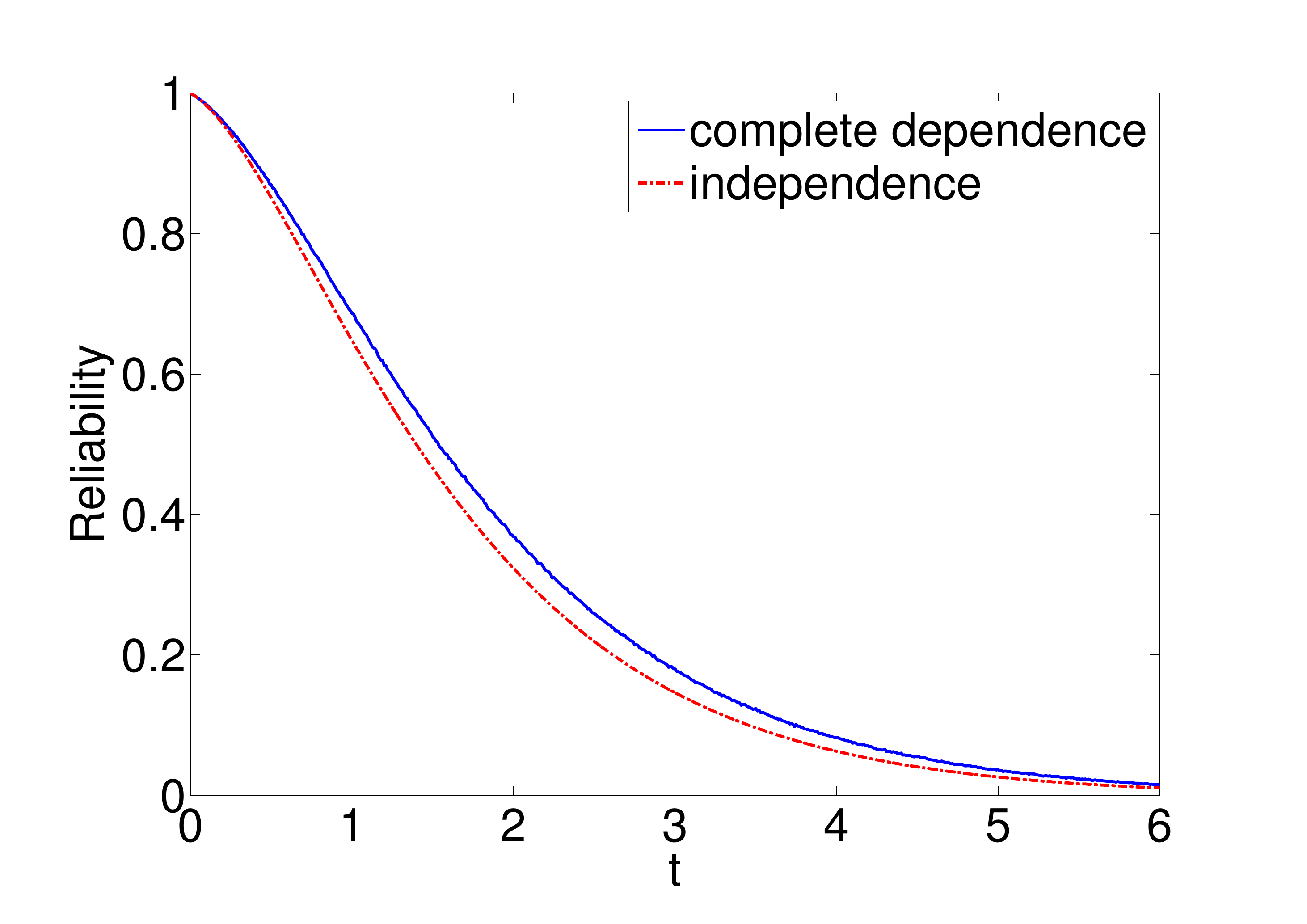}\vspace*{-0.5cm}%
\caption{Comparison of reliability for two different types of dependence
between $V^{(1)}$ and $V^{(2)}$, Example \ref{ex8}}%
\label{fig6}%
\end{figure}
\end{example}

\begin{example}
\label{ex6}This example shows the monotony of the reliability with respect to
the intensity of the Poisson process $\lambda\left(  x\right)  $ when $q$ is
increasing, see Proposition \ref{influence_lambda}. The more frequently the
shocks occur, the lower the reliability is (Fig. \ref{fig3}).

\begin{figure}[ptb]
\centering\includegraphics[scale=0.32,angle=0]{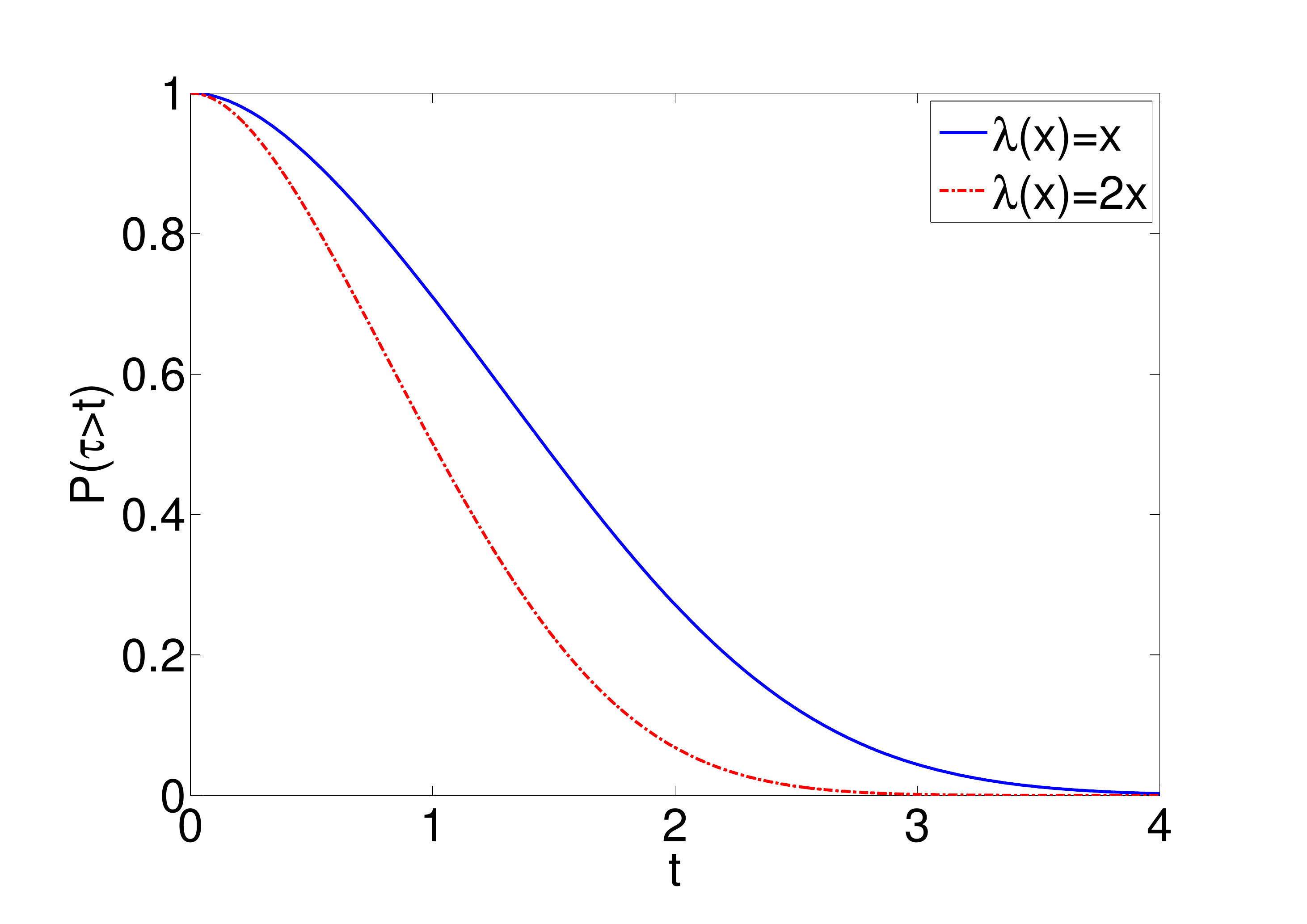}\vspace*{-0.5cm}%
\caption{Comparison of reliability for two different $\lambda\left(  x\right)
$, Example \ref{ex6}}%
\label{fig3}%
\end{figure}
\end{example}

\section{Concluding remarks\label{concl}}

We here proposed a random shock model with competing failure modes, which
enlarges several models from the previous literature. The model takes into
account different types of dependence between competing failures modes, where
the dependence is induced by a common external shock environment. The
reliability has been calculated by several different methods and conditions
have been provided under which the system lifetime is New Better than Used.
Due to this ageing property, it might be of interest to propose and study some
maintenance policy to enlarge the system lifetime. Several versions might be
proposed, according to the available information.

Also, the influence of the characteristics of the stressing environment on the
lifetime $\tau$ has been studied. As expected, we saw that the lifetime was
stochastically increasing with the probability $q\left(  \cdot\right)  $ for a
shock to be non fatal. Besides, and that result was not necessarily so clear
at first sight, we saw that the lifetime was also stochastically increasing
with the dependence between the two marginal shock sizes. Finally, in case of
a non decreasing function $q\left(  \cdot\right)  $, we saw that the lifetime
was stochastically decreasing with the cumulated frequency of shocks. This
means that the more frequent the shocks occur, the shorter the lifetime is.
This result is natural but our proof is limited to the case of a non
decreasing function $q\left(  \cdot\right)  $. In the special case of
\cite{cha2011stochastic}, the survival function of $\tau$ is however given by
\[
\mathbb{P}(\tau>t)=e^{-H(t)-\int_{0}^{t}\left(  1-\tilde{\mu}_{1}%
(t-w)q(w)\right)  \lambda(w)~dw}%
\]
and it is easy to check that if $\lambda\geq\tilde{\lambda}$ (stronger
assumption than $\Lambda\geq\tilde{\Lambda}$) then $\tau\leq_{st}\tilde{\tau}%
$, without any special condition on $q\left(  \cdot\right)  $. So, the
stochastic monotonicity of the lifetime with respect of the (cumulated ?)
frequency of shocks might be true under a more general setting than in the
present paper, without assuming any monotonicity condition on $q\left(
\cdot\right)  $. We however have not been able to conclude on this point, and
whether it is true or not remains an open question.

\begin{acknowledgement}
Both authors thank the referees for their carefull reading of the paper and their constructive remarks, which lead to a better introduction and justification of the model, and to a clearer paper.
This work has been initiated during Hai Ha PHAM's PhD studies in Pau (France), and has been supported by the Conseil R\'{e}gional d'Aquitaine (France). 
This work has also been supported for both authors by the French National Research Agency (AMMSI project, ref. ANR 2011 BS01-021).
\end{acknowledgement}


\begin{thebibliography}{999999999999999999999999999999999999999999999999999999999}                                        %
\bibitem[A-Hameed and Proschan(1973)]{abdul1972nonstationary}M.~S. A-Hameed
and F.~Proschan. \newblock Nonstationary shock models.
\newblock \emph{Stochastic Processes and their Applications}, \textbf{1}%
\penalty0 (4):\penalty0 383--404, 1973.

\bibitem[A-Hameed and Proschan(1975)]{proschan1975shock}M.~S. A-Hameed and
F.~Proschan. \newblock Shock models with underlying birth process.
\newblock
\emph{Journal of Applied Probability}, \textbf{12}\penalty0 (1):\penalty0
18--28, 1975.

\bibitem[Brown and Proschan(1983)]{brown1983imperfect}M.~Brown and
F.~Proschan. \newblock Imperfect repair. \newblock \emph{Journal of Applied
Probability}, \textbf{20}\penalty0 (4):\penalty0 851--859, 1983.

\bibitem[Cha and Finkelstein(2009)]{cha2009terminating}J.~H. Cha and
M.~Finkelstein. \newblock On a terminating shock process with independent wear
increments. \newblock \emph{Journal of Applied Probability}, \textbf{46}%
\penalty0 (2):\penalty0 353--362, 2009.

\bibitem[Cha and Mi(2007)]{cha2007study}J.~H. Cha and J.~Mi. \newblock Study
of a stochastic failure model in a random environment. \newblock \emph{Journal
of Applied Probability}, \textbf{44}\penalty0 (1):\penalty0 151--163, 2007.

\bibitem[Cha and Mi(2011)]{cha2011stochastic}J.~H. Cha and J.~Mi. \newblock On
a stochastic survival model for a system under randomly variable environment.
\newblock \emph{Methodology and Computing in Applied Probability},
\textbf{13}\penalty0 (3):\penalty0 549--561, 2011.

\bibitem[{\c{C}}inlar(2011)]{ccnlar2011probability}Erhan {\c{C}}inlar.
\newblock \emph{Probability and stochastics}, volume 261 of \emph{Graduate
texts in Mathematics}. \newblock Springer Science + Business Media, 2011.

\bibitem[Cocozza-Thivent(1998)]{cocozza1998processus}C.~Cocozza-Thivent.
\newblock \emph{Processus stochastiques et fiabilit{\'e} des syst{\`e}mes},
volume~28 of \emph{Math\'{e}matiques et Applications}. \newblock Springer, 1998.

\bibitem[Esary et~al.(1973)Esary, Marshall, and Proschan]{esary1973shock}J.~D.
Esary, A.~W. Marshall, and F.~Proschan. \newblock Shock models and wear
processes. \newblock \emph{The Annals of Probability}, \textbf{1}\penalty0
(4):\penalty0 627--649, 1973.

\bibitem[\color{black}Finkelstein and Cha(2013)]{finkelstein2013stochastic}%
\color{black}M.~Finkelstein and J.~H. Cha.
\newblock \emph{\color{black}Stochastic modeling for reliability: Shocks,
burn-in and heterogeneous populations}. \newblock \color{black}Springer Series
in Reliability Engineering. Springer, London, 2013.\color{black}

\bibitem[Gut(2001)]{gut2001mixed}A.~Gut. \newblock Mixed shock models.
\newblock \emph{Bernoulli}, \textbf{7}\penalty0 (3):\penalty0 541--555, 2001.

\bibitem[Gut and H{\"u}sler(1999)]{gut1999extreme}A.~Gut and J.~H{\"u}sler.
\newblock Extreme shock models. \newblock \emph{Extremes}, \textbf{2}\penalty0
(3):\penalty0 295--307, 1999.

\bibitem[\color{black}Hao et~al.(2013)Hao, Su, and Qu]{Hao2013}%
\color{black}H.-B. Hao, C.~Su, and Z.-Z. Qu.
\newblock \color{black}Reliability analysis for mechanical components subject
to degradation process and random shock with wiener process.
\newblock \color{black}In \emph{\color{black}19th International Conference on
Industrial Engineering and Engineering Management}\color{black}, pages
531--543, 2013.\color{black}

\bibitem[\color{black}Lehmann(2006)]{lehmann2006}\color{black}A.~Lehmann.
\newblock \color{black}Degradation-threshold-shock models.
\newblock \emph{\color{black}Probability, Statistics and Modelling in Public
Health}\color{black}, pages 286--298, 2006.\color{black}

\bibitem[\color{black}Lehmann(2009)]{lehmann2009}\color{black}A.~Lehmann.
\newblock \color{black}Joint modeling of degradation and failure time data.
\newblock \emph{\color{black}Journal of Statistical Planning and Inference},
\color{black}139\penalty0 (5):\penalty0 1693 -- 1706, 2009.
\newblock \color{black}Special Issue on Degradation, Damage, Fatigue and
Accelerated Life Models in Reliability Testing.\color{black}

\bibitem[\color{black}Lemoine and Wenocur(1985)]{lemoine1985}%
\color{black}A.~J. Lemoine and M.~L. Wenocur. \newblock \color{black}On
failure modeling. \newblock \emph{\color{black}Naval Research Logistics
Quarterly}\color{black}, 32\penalty0 (3):\penalty0 497--508, 1985.\color{black}

\bibitem[Mallor and Omey(2001)]{mallor2001shocks}F.~Mallor and E.~Omey.
\newblock Shocks, runs and random sums. \newblock \emph{Journal of Applied
Probability}, \textbf{38}\penalty0 (2):\penalty0 438--448, 2001.

\bibitem[Mallor and Santos(2003{\natexlab{a}})]{mallor2003classification}%
F.~Mallor and J.~Santos. \newblock Classification of shock models in system
reliability. \newblock \emph{Monograf{\'i}as del Semin. Matem. Garc{\'i}a de
Galdeano}, \textbf{27}:\penalty0 405--412, 2003{\natexlab{a}}.

\bibitem[Mallor and Santos(2003{\natexlab{b}})]{mallor2003reliability}%
F.~Mallor and J.~Santos. \newblock Reliability of systems subject to shocks
with a stochastic dependence for the damages. \newblock \emph{Test},
\textbf{12}\penalty0 (2):\penalty0 427--444, 2003{\natexlab{b}}.

\bibitem[Marshall and Shaked(1979)]{marshall1979multivariate}A.W. Marshall and
M.~Shaked. \newblock Multivariate shock models for distributions with
increasing hazard rate average. \newblock \emph{The Annals of Probability},
\textbf{7}\penalty0 (2):\penalty0 343--358, 1979.

\bibitem[\color{black}Nakagawa(2007)]{nakagawa2007shock}%
\color{black}T.~Nakagawa. \newblock \emph{\color{black}Shock and damage models
in reliability theory}. \newblock \color{black}Springer Series in Reliability
Engineering. Springer, London, 2007.\color{black}

\bibitem[Qian et~al.(1999)Qian, Nakamura, and Nakagawa]{qian1999cumulative}%
C.~Qian, S.~Nakamura, and T.~Nakagawa. \newblock Cumulative damage model with
two kinds of shocks and its application to the backup policy. \newblock
\emph{Journal of the Operations Research Society of Japan-Keiei Kagaku},
\textbf{42}\penalty0 (4):\penalty0 501--511, 1999.

\bibitem[Savits(1988)]{savits1988some}T.~H. Savits. \newblock Some
multivariate distributions derived from a non-fatal shock model. \newblock
\emph{Journal of Applied Probability}, \textbf{25}\penalty0 (2):\penalty0
383--390, 1988.

\bibitem[Shaked and Shanthikumar(2006)]{shaked2006stochastic}M.~Shaked and
J.~G. Shanthikumar. \newblock \emph{Stochastic orders}. \newblock Springer
Series in Statistics. Springer, 2006.

\bibitem[Singpurwalla(1995)]{singpurwalla1995survival}N.~D. Singpurwalla.
\newblock Survival in dynamic environments. \newblock \emph{Statistical
Science}, \textbf{10}\penalty0 (1):\penalty0 86--103, 1995.

\bibitem[Skoulakis(2000)]{skoulakis2000}G.~Skoulakis. \newblock A general
shock model for a reliability system. \newblock \emph{Journal of Applied
Probability}, \textbf{37}\penalty0 (4):\penalty0 925--935, 2000.

\bibitem[\color{black}Wang and Gao(2014)]{Wang2014}\color{black}H.W. Wang and
J.~Gao. \newblock \color{black}A reliability evaluation study based on
competing failures for aircraft engines.
\newblock \emph{\color{black}Eksploatacja i Niezawodnosc: Maintenance and
Reliability}, 16\penalty0 (2):\penalty0 171--178, 2014.\color{black}

\bibitem[\color{black}Zhu et~al.(2010)Zhu, Elsayed, Liao, and Chan]%
{Zhu2010}\color{black}Y.~Zhu, E.A. Elsayed, H.~Liao, and L.Y. Chan.
\newblock \color{black}Availability optimization of systems subject to
competing risk. \newblock \emph{\color{black}European Journal of Operational
Research}, \color{black}202\penalty0 (3):\penalty0 781 -- 788, 2010.\color{black}
\end{thebibliography}
\end{document}